\newtheorem{theorem}{Theorem}[section]
\newtheorem{lemma}[theorem]{Lemma}
\theoremstyle{definition}
\newtheorem{definition}[theorem]{Definition}
\theoremstyle{remark}
\newtheorem{remark}[theorem]{Remark}
\numberwithin{equation}{section}
\def\D{\mathcal{D}}
\def\bC{\mathbf{C}}
\def\bP{\mathbb{P}}
\def\bZ{{\mathbb Z}}
\def\bR{{\mathbb R}}
\def\sF{{\mathscr F}}
\def\sA{{\mathscr A}}
\def\sG{{\mathscr G}}
\def\sL{\mathscr{L}}
\def\cD{\mathcal{D}}
\def\bE{\mathbb{E}}
\def\bN{\mathbb{N}}
\begin{document}

\title[Approximation of birth-death processes]{Approximation of birth-death processes}

\author{ Liping Li}
\address{Fudan University, Shanghai, China.  }
\email{liliping@fudan.edu.cn}
\thanks{The author is a member of LMNS,  Fudan University.  He is partially supported by NSFC (No. 11931004
and 12371144). }



\subjclass[2020]{Primary 60J27,  60J40,    60J50,  60J46,  60J55,  60J74.}

\date{\today}



\keywords{Birth-death processes,  Continuous-time Markov chains,  Ray-Knight compactification,  Boundary conditions,  Weak convergence,  Skorohod topology,  Skorohod representation}

\begin{abstract}
The birth-death process is a special type of continuous-time Markov chain with index set $\mathbb{N}$. Its resolvent matrix can be fully characterized by a set of parameters $(\gamma, \beta, \nu)$, where $\gamma$ and $\beta$ are non-negative constants, and $\nu$ is a positive measure on $\mathbb{N}$. By employing the Ray-Knight compactification, the birth-death process can be realized as a c\`adl\`ag process with strong Markov property on the one-point compactification space $\overline{\mathbb{N}}_{\partial}$, which includes an additional cemetery point $\partial$.  In a certain sense, the three parameters that determine the birth-death process correspond to its killing, reflecting, and jumping behaviors at $\infty$ used for the one-point compactification,  respectively.

In general, providing a clear description of the trajectories of a birth-death process, especially in the pathological case where $|\nu|=\infty$, is challenging. This paper aims to address this issue by studying the birth-death process using approximation methods. Specifically, we will approximate the birth-death process with simpler birth-death processes that are easier to comprehend. For two typical approximation methods, our main results establish the weak convergence of a sequence of probability measures,  which are induced by the approximating processes, on the space of all c\`adl\`ag functions. This type of convergence is significantly stronger than the convergence of transition matrices typically considered in the theory of continuous-time Markov chains.

\end{abstract}

\maketitle

\tableofcontents

\section{Introduction}\label{SEC1}

The birth-death process is a specific type of continuous-time Markov chain with index set $\mathbb{N}$.  Its $Q$-matrix is given by \eqref{eq:11}, and its key characteristic is that it can only transition between adjacent states in $\bN$. Building upon the work of Feller \cite{F59},  Yang demonstrated that all birth-death processes can be obtained by solving the resolvent matrix  (see \cite[Chapter 7]{WY92}).  In essence, each birth-death process is determined by a set of parameters $(\gamma, \beta, \nu)$ that satisfy certain conditions, where $\gamma$ and $\beta$ are two non-negative constants, and $\nu$ is a positive measure on $\mathbb{N}$. For further details, please refer to  \S\ref{SEC23}.

In the study of continuous-time Markov chains, the index set $\mathbb{N}$ is typically equipped with the discrete topology and considered as the state space of the corresponding process. This perspective is reasonable when our focus is limited to the transition matrix or objects related to the distribution of the process.  However, when considering the trajectory of the process, specifically examining its measurability and regularity with respect to time $t$, setting the state space as the discrete space $\mathbb{N}$ can lead to certain ``peculiar" phenomena. The most extreme example is the famous \emph{Feller-McKean chain} (see \cite{FM56}), in which every point in $\mathbb{N}$ is an \emph{instantaneous} state, meaning that the process does not stay at any point for a period of time. Notably, all the diagonal elements of its $Q$-matrix are infinity. In the context of the discrete topology, it is challenging to comprehend the trajectory of the Feller-McKean chain.   However, in reality, the Feller-McKean chain can be derived by confining a regular diffusion process on $\mathbb{R}$ to the set of rational numbers $\mathbb{Q}$ (see \cite[III\S23]{RW00}). Regular diffusions, on the other hand, have a well-established theoretical characterization (see, for example, \cite{IM74}).   Consequently, it is not the Feller-McKean chain itself that is difficult to comprehend, but rather the difficulty arises solely from the setting of the discrete topology.

In the early stages of the development of continuous-time Markov chain theory, Doob realized that the index set $\bN$ is not sufficient to accommodate well-behaved process realizations (see \cite{D53}). He then found a so-called \emph{separable modification} for each continuous-time Markov chain, with trajectories of this modification possessing \emph{Borel measurability}, \emph{separability}, and \emph{lower semicontinuity} (see \cite[II\S4]{C67}). It should be noted that $\infty$ may need to be added to the state space as the compactification point of $\bN$, becoming a state that process trajectories frequently visit (when the index set is $\bZ$, apart from $\infty$, $-\infty$ may also need to be added to the state space). Doob's modification became a cornerstone of continuous-time Markov chain theory and has been widely applied in various aspects of related theory (see \cite{C67} and \cite{WY92}). However, for some special examples, the one-point compactification topology of $\bN$ does not reveal the underlying structural essence of the model. In the case of the Feller-McKean chain, the one-point compactification topology of $\bN$ is far from the induced topology of the Euclidean metric on $\mathbb{Q}$. Doob's modification also falls short of capturing the regular diffusion that generates the Feller-McKean chain. 

Another approach, proposed by Ray in 1959 (see \cite{R59}), can achieve this goal. It is now well known as the \emph{Ray-Knight compactification}. Ray's theory is applicable to almost all processes satisfying the Markov property. It uses the resolvent to introduce a new metric, expanding the state space in a way that completes the metric, and constructs a c\`adl\`ag process satisfying the strong Markov property on the new state space, called a \emph{Ray process}. Ray's approach can also be applied profoundly in the study of continuous-time Markov chains (see \cite[Chapter 6]{RW00} and \cite[Chapter 9]{CW05}), and in some aspects, it is even superior to using Doob's separable modification. Specifically, when Ray-Knight compactification is applied to the Feller-McKean chain, it yields the regular diffusion used to construct the Feller-McKean chain (see \cite[III~(35.7)]{RW00}).

In a previous article \cite{L23}, we investigated all birth-death processes using the Ray-Knight compactification approach. The key advantage of this approach is that it enables us to realize every birth-death process as a c\`adl\`ag process that satisfies the strong Markov property on $\overline{\mathbb{N}}_{\partial}$ (see \S\ref{SEC21} for this symbol). Interestingly, for birth-death processes, both the Doob's modification and the Ray-Knight method result in the same topological transformation. Additionally, with the exception of relatively simple \emph{Doob processes} (see,  e.g.,  \cite[\S5]{L23}), all birth-death processes are Feller processes. Consequently, to study birth-death processes, we have expanded our toolbox beyond traditional methods for studying continuous-time Markov chains to incorporate the rich theory of general Markov processes.

The primary objective of the paper \cite{L23} is to provide a clear characterization of the trajectories of all birth-death processes, particularly their behavior near the boundary point $\infty$. This issue has not been well-addressed in the existing literature based on continuous-time Markov chain methods, such as \cite{WY92, C04}, and others.

Using the framework of Feller processes, we demonstrated in \cite{L23} that the parameters $(\gamma, \beta, \nu)$ determining a birth-death process reflect its different behaviors at the boundary point $\infty$. From an analytical perspective, these behaviors are described by the boundary condition \eqref{eq:21-2} satisfied by the functions in the domain of the infinitesimal generator. From a probabilistic standpoint, $\gamma$, $\beta$, and $\nu$ respectively describe the \emph{killing}, \emph{reflecting}, and \emph{jumping} behaviors of the birth-death process at $\infty$. This probabilistic interpretation is clear when the jumping measure $\nu$ is finite.  More precisely,  Doob process corresponds to the case of $\beta=0$ and $|\nu|<\infty$, where $|\nu|$ is the total variation of $\nu$. It can be  obtained by the \emph{piecing out transformation}  (see \cite{INW66}) of the minimal birth-death process $X^\text{min}$ with respect to the distribution $\pi$ on $\bN\cup \{\partial\}$ given by 
\begin{equation}\label{eq:28}
\pi(\{k\})=\frac{\nu_k}{\gamma+|\nu|},\; k\in \bN,\quad \pi(\{\partial\})=\frac{\gamma}{\gamma+|\nu|}.
\end{equation}
Intuitively, whenever it is about to reach $\infty$, the Doob process always jumps back to $\bN\cup\{\partial\}$, and the probability of arriving at position $k\in \bN\cup\{\partial\}$ is determined by \eqref{eq:28}. For the case of $\beta>0$ and $|\nu|<\infty$, the relevant description requires adjusting the above formulation by considering the birth-death process $X^1$ corresponding to the parameters $(\gamma+|\nu|,\beta,0)$ instead of $X^\text{min}$, and by adapting the random time of approaching $\infty$ to the \emph{lifetime} of $X^1$, namely the time when $X^1$ enters the cemetery $\partial$. Note that $X^1$ can be obtained as a subprocess of the $(Q,1)$-process (which plays a similar role to a reflecting Brownian motion on $[0,\infty)$; see \cite[\S3.3]{L23}) under the \emph{killing transformation} using the \emph{multiplicative functional} 
\[
	M_t:=e^{-\frac{|\nu|+\gamma}{\beta}L_t},\quad t\geq 0,
\]
where $(L_t)_{t\geq 0}$ is the local time of $(Q,1)$-process at $\infty$. For the rigorous construction of the subprocess, readers can refer to \cite[III, \S3]{BG68}.

However, in the case of $|\nu|=\infty$, the construction of piecing out is no longer effective because $\pi$ given by \eqref{eq:28} becomes meaningless. Similar to L\'evy processes with infinite L\'evy measures, in this case, the birth-death process may experience a high frequency of jumps into $\mathbb{N}\cup \{\partial\}$ (from $\infty$) at certain times $t$.  Namely, for any $\varepsilon>0$, there are infinitely many jumps from $\infty$ to $\mathbb{N}\cup\{\partial\}$ occurring within the time interval $[t,t+\varepsilon]$. This makes it extremely difficult to provide a clear description of the trajectories of the birth-death process. Feller referred to this situation as a
``pathological case" in \cite{F59}, probably for this reason.

In this article, we will explore how to use simpler birth-death processes to approximate complex birth-death processes. The significance of this investigation lies in the fact that if we can establish the convergence of the sequence of approximating processes, then even in the pathological case where $\pi$ determined by \eqref{eq:28} lacks meaning, interpreting the target birth-death process through piecing out becomes intuitively acceptable.


Let us first discuss how to obtain simplified birth-death processes for approximation. One approach is to optimize the parameters $(\gamma, \beta, \nu)$ of the target birth-death process and then to apply Feller-Yang's resolvent approach to generate processes using the new parameters. 
The simplest example is to truncate the measure $\nu$ directly as follows:
\begin{equation}\label{eq:13}
\gamma^{(n)}:=\gamma, \quad \beta^{(n)}:=\beta,\quad \nu^{(n)}:=\nu|_{\{0,1,\cdots, n\}},
\end{equation}
where $\nu^{(n)}$ represents the measure $\nu$ restricted to $\{0,1,\cdots, n\}$,  and consider $X^{(n)}$ as the birth-death process determined by the parameters $(\gamma^{(n)}, \beta^{(n)}, \nu^{(n)})$. 
 Note that this approach has been briefly mentioned in \cite[\S9]{L23}.  Another approach, proposed by Wang in his 1958 doctoral thesis (see \cite{WY92}), differs significantly from the first approach but is important in the theory of birth-death processes. Wang constructed a sequence of Doob processes with instantaneous measures $\pi^{(n)}$ supported on $\{0, 1,\cdots, n\}$ by removing the part of each trajectory of the target birth-death process that starts from $\infty$ until it returns to $\{0, 1, \cdots, n\}$.  (The original purpose of this approximation method was to provide a probabilistic construction for all birth-death processes.) We will further elaborate on this approximation method in \S\ref{SEC61}.
Next, let us consider in what sense the constructed sequence of birth-death processes can converge to the target process. Wang's research is based on the theory of continuous-time Markov chains, where the core object is the transition matrix that defines the birth-death process. Therefore, the established convergence also refers to the convergence of the transition matrices, i.e.,
\begin{equation}\label{eq:12}
\lim_{n\rightarrow \infty}p^{(n)}_{ij}(t)=p_{ij}(t),\quad \forall i,j\in \mathbb{N}.
\end{equation}
This convergence is equivalent to the convergence of the resolvents; see Theorem~\ref{THM42}. In the context of general Markov processes, it is possible to extend our study. Since the trajectories of $X^{(n)}$ and $X$ are c\`adl\`ag, they can all be realized as probability measures on the space $D_{\overline{\mathbb{N}}_\partial}[0,\infty)$, which consists of all c\`adl\`ag functions on $\overline{\bN}_\partial$. This space is typically equipped with the Skorohod topology. Consequently, we can attempt to establish the weak convergence of this sequence of probability measures on $D_{\overline{\mathbb{N}}_\partial}[0,\infty)$. This weak convergence is significantly stronger than the convergence of the transition matrices \eqref{eq:12}.

The main goal of this paper is to establish the weak convergence of the probability measures on $D_{\overline{\bN}_\partial}[0,\infty)$ associated with both the optimization parameter approximation and Wang's approximation.  For the first type of approximation,  the weak convergence has been easily established under additional assumptions of Feller properties in \cite[\S9]{L23}. However, when the approximating sequence consists of Doob processes, the discussion becomes more challenging.  Our proof requires an analytical characterization of Doob processes, which, although not Feller processes, can still yield a strongly continuous contractive semigroup when restricted to a closed subspace of the space of continuous functions. This result will be proven in \S\ref{SEC5}. As for Wang's approximation, in addition to establishing weak convergence on the Skorohod topological space, we will also consider another topology on the space $D_{\overline{\bN}_\partial}[0,\infty)$ determined by convergence in (Lebesgue) measure. Under this new topology, we can prove both weak convergence and almost sure convergence of $D_{\overline{\bN}_\partial}[0,\infty)$-valued random variables induced by the birth-death process sequence. In other words, in the topology determined by convergence in  measure, Wang's approximation not only satisfies weak convergence of the  probability measure sequence, but also provides an intuitive construction of the corresponding \emph{Skorohod representation}.

Finally, we would like to briefly explain the notation that will be frequently used in this paper. Since it is a follow-up study of \cite{L23}, we will strive to maintain consistency with the notation used in that paper. However, for the sake of clarity, some symbols have been adjusted. For instance, all symbols related to the minimal birth-death process are denoted with a superscript ${}^\text{min}$, and those related to approximating birth-death processes are represented by superscript ${}^{(n)}$. (Symbols related to the target birth-death process do not carry any superscripts.)  Sometimes, the integral of a function $f$ with respect to a measure $m$ is denoted by $m(f)$.  Additionally, we need to correct an error in the derivation of boundary condition \eqref{eq:21-2} in \cite{L23}. In this equation, the parameter $\frac{\beta}{2}$ in front of $F^+$ was incorrectly written as $\beta$ in \cite{L23}. This error occurred because the scale function used in \cite{L23}, which is given by \eqref{eq:25-2}, is half of the scale function defined in Feller \cite{F59}. However, in the proof of \cite[Theorem~6.3]{L23} (the equation above \cite[(6.8)]{L23}), it was mistakenly overlooked that the result from \cite{F59} (i.e., \eqref{eq:66} in this paper) needs to be multiplied by a factor of two.

\section{Preliminaries of birth-death processes}\label{SEC2}

We consider a birth-death density matrix as follows:
\begin{equation}\label{eq:11}
Q=(q_{ij})_{i,j\in \bN}:=\left(\begin{array}{ccccc}
-q_0 &  b_0 & 0 & 0 &\cdots \\
a_1 & -q_1 & b_1 & 0 & \cdots \\
0 & a_2 & -q_2 & b_2 & \cdots \\
\cdots & \cdots &\cdots &\cdots &\cdots 
\end{array}  \right),
\end{equation}
where $a_k>0$ for $k \geq 1$ and $b_k>0,  q_k=a_k+b_k$ for $k\geq 0$. (Set $a_0=0$ for convenience.) A continuous-time Markov chain $X$ is called a \emph{birth-death $Q$-process} (or simply a \emph{$Q$-process}) if its transition matrix $(p_{ij}(t))_{i,j\in \bN}$ is \emph{standard}, and its \emph{density matrix} is $Q$,  i.e., $p'_{ij}(0)=q_{ij}$ for $i,j\in \bN$.  A $Q$-process is called \emph{honest} if its transition matrix $(p_{ij}(t))_{i,j\in \bN}$ satisfies $\sum_{j\in \bN}p_{ij}(t)=1$ for all $i\in \bN$ and $t\geq 0$.  In our context, two $Q$-processes with the same transition matrix will not be distinguished. For convenience, we will also refer to $(p_{ij}(t))_{i,j\in \bN}$ as a $Q$-process when no confusion arises.  For further terminology concerning continuous-time Markov chains, readers are referred to \cite{C67, WY92}; see also \cite{L23}. 

\subsection{State space}\label{SEC21}

The index set $\bN$ of the transition matrix is typically referred to as the \emph{minimal state space}.  The ``real" state space of a $Q$-process is its one-point compactification (see,  e.g.,  \cite[\S2.1]{L23})
\[
	\overline{\bN}:=\bN \cup \{\infty\},
\]
where $\overline{\mathbb{N}}$ can be metrized with the metric
\[
	r(n,m) = \left| \frac{1}{n+1} - \frac{1}{m+1} \right|,\quad r(n,\infty) = \frac{1}{n+1},\quad n,m\in \bN.
\]
This establishes a topological homeomorphism between $\overline{\mathbb{N}}$ and the set 
\begin{equation}\label{eq:22-2}
\left\{1, \frac{1}{2}, \frac{1}{3}, \dots, \frac{1}{n+1}, \dots, 0\right\},
\end{equation}
equipped with the relative topology of $\bR$.  
Since we do not always consider honest $Q$-processes, it is necessary to introduce a \emph{cemetery point} $\partial$, which lies outside the state space $\overline{\mathbb{N}}$. It should be emphasized that, unless explicitly stated otherwise, $\partial$ is always treated as an isolated point distinct from $\overline{\mathbb{N}}$. For instance, we can define the distance between a state $n\in \overline{\bN}$ and the cemetery point as $r(n,\partial) = \left| \frac{1}{n+1} + 1 \right|$ ($\frac{1}{\infty}:=0$),  where the inclusion of $\partial$ into $\overline{\mathbb{N}}$ is equivalent to adding the point $-1$ to \eqref{eq:22-2}. 

Given the metric $r$,  the set
\[
	\overline{\bN}_\partial :=\overline{\bN}\cup \{\partial\},
\]
forms a compact,  separable  metric space,  and its subspace
\[
	\bN_\partial:=\bN \cup \{\partial\}
\]
is a locally compact,  separable metric space.  For every bounded function $f$ defined on  either $\overline{\bN}_\partial$ or $\bN_\partial$,  we define
\[
	\|f\|_\infty:=\sup_{x\in \overline{\bN}_\partial\text{ or }\bN_\partial}|f(x)|. 
\] 

Let $C(\overline{\bN}_\partial)$ denote the family of all continuous functions on $\overline{\bN}_\partial$,  where we emphasize that for $f\in C(\overline{\bN}_\partial)$,  the value of $f(\partial)$ may no be equal to $0$.  Define
\[
	C(\overline{\bN}):=\left\{f\in C(\overline{\bN}_\partial): f(\partial)=0\right\}.  
\]
The families $C_b(\bN_\partial)$, $C_0(\bN_\partial)$, and $C_c(\bN_\partial)$ consist of all continuous functions on $\bN_\partial$ that are bounded, that vanish at infinity, and that have compact support, respectively.  In particular,  a function $f$ defined on $\bN_\partial$ belongs to $C_0(\bN_\partial)$ (resp.  $C_c(\bN_\partial)$) if $\lim_{n\rightarrow \infty}f(n)=0$ (resp.  if there exists an integer $N$ such that $f(n)=0$ for all $n\geq N$).   Further, we define $C_0(\bN):=\{f\in C_0(\bN_\partial): f(\partial)=0\}$ and $C_c(\bN):=\{f\in C_c(\bN_\partial): f(\partial)=0\}$.  

\subsection{Minimal $Q$-process}

Given the density matrix \eqref{eq:11}, there exists a particular birth-death process known as the \emph{minimal $Q$-process},  which is denoted by $X^\text{min}=\left(X^\text{min}_t \right)_{t\geq 0}$ and whose transition matrix is denoted by $\left(p^\text{min}_{ij}(t)\right)_{i,j\in \bN}$. Analytically, this process corresponds to the minimal solution to the \emph{Kolmogorov backward equation} (as discussed in \cite[\S10]{F59}). From a probabilistic perspective,  it represents a $Q$-process with minimal information, where the trajectories are terminated as they approach $\infty$ at the first time.

We introduce further notations for the minimal $Q$-process.  Let $\zeta^\text{min}:=\inf\{t>0:X^\text{min}_t=\partial\}$ denote the lifetime of $X^\text{min}$,  and define
\begin{equation}\label{eq:22}
	u^\text{min}_\alpha(i):=\bE^\text{min}_i e^{-\alpha \zeta^\text{min}},\quad \alpha>0, i\in \bN.  
\end{equation}
The resolvent matrix of this process is given by
\begin{equation}\label{eq:21}
	\Phi^\text{min}_{ij}(\alpha):=\int_0^\infty e^{-\alpha t}p_{ij}^\text{min}(t)dt,\quad i,j\in \bN,\alpha>0.
\end{equation}
Then,  it is straightforward to verify the following relation:
\begin{equation}\label{eq:24}
	1-u_\alpha^\text{min}(i)=\alpha \sum_{j\in \bN}\Phi^\text{min}_{ij}(\alpha),\quad i\in \bN, \alpha>0.
\end{equation}

Similar to regular diffusions on an interval, the minimal $Q$-process can be fully characterized by two parameters on $\bN$ derived from the matrix \eqref{eq:11}: a \emph{scale function} and a \emph{speed measure}.  The scale function $(c_k)_{k\in \bN}$ is given by
\begin{equation}\label{eq:25-2}
	c_0=0,  \quad c_1=\frac{1}{2b_0}, \quad c_k=\frac{1}{2b_0}+\sum_{i=2}^k \frac{a_1a_2\cdots a_{i-1}}{2b_0b_1\cdots b_{i-1}}, \;k\geq 2,
\end{equation}
 the \emph{speed measure} $\mu$ is
 \[
\mu(\{0\}):=\mu_0=1,  \quad \mu(\{k\}):=\mu_k=\frac{b_0b_1\cdots b_{k-1}}{a_1a_2\cdots a_k}, \; k\geq 1.  
\]
The process $X^\text{min}$ is \emph{symmetric} with respect to $\mu$ in the sense that $\mu_ip_{ij}^\text{min}(t)=\mu_j p_{ji}^\text{min}(t)$ for any $i,j\in \bN$ and $t\geq 0$.  Thus,  the speed measure $\mu$ is also known as the \emph{symmetric measure} of $X^\text{min}$.  Further details and related results are referred to \cite[\S3.1]{L23}, which provides a characterization involving time change transformation of Brownian motion.

Using another two parameters derived from the scale function and the speed measure, we can classify the boundary point $\infty$ in accordance with Feller's approach.  Specifically, we define the following two quantities:
\[
	R:=\sum_{k=0}^\infty (c_{k+1}-c_k)\cdot \sum_{i=0}^k \mu_i, \quad S:=\sum_{k= 0}^\infty c_k \mu_k. 
\]
The following classification for the boundary point $\infty$ is very well known.

\begin{definition}\label{DEF23}
The boundary point $\infty$ (for $X^\text{min}$) is called
\begin{itemize}
\item[(1)] \emph{regular},  if ${R}<\infty,  {S}<\infty$;
\item[(2)] an \emph{exit},  if ${R}<\infty,  {S}=\infty$;
\item[(3)] an \emph{entrance},  if ${R}=\infty,  {S}<\infty$;
\item[(4)] \emph{natural},  if ${R}= {S}=\infty$.
\end{itemize}
\end{definition}
\begin{remark}
Note that $\infty$ is regular if and only if $c_\infty+\mu(\bN)<\infty$,  where
\[
	c_\infty:=\lim_{k\rightarrow \infty} c_k.
\]  
 If $\infty$ is an exit,  then $c_\infty<\infty$ and $\mu(\bN)=\infty$.  If $\infty$ is an entrance,  then $c_\infty=\infty$ and $\mu(\bN)<\infty$.  If $\infty$ is natural,  then $c_\infty+\mu(\bN)=\infty$. 
\end{remark}

It is crucial to highlight that $Q$-processes are unique if and only if $\infty$ is classified as an entrance or natural boundary; for more details, refer to, e.g., \cite[Theorem~3.5]{L23}.  In this paper,  however, we focus on the non-uniqueness case, where $\infty$ is either regular or an exit.  This non-uniqueness implies the condition $c_\infty<\infty$.  Furthermore, it holds that
\begin{equation}\label{eq:23}
	\lim_{i\rightarrow \infty} u^\text{min}_\alpha(i)=1;
\end{equation}
see,  e.g.,  \cite[Lemma~4.1]{L23}.  

\subsection{Parameters determining birth-death processes}\label{SEC23}

From now on, we will assume that $\infty$ is either regular or  an exit.  In addition to the minimal one, consideration of other $Q$-processes, such as Doob processes and the $(Q,1)$-process (which is only applicable in the regular case), is warranted.

 It was first examined by Feller in \cite{F59} and then firmly established by Yang in 1965 (see \cite[Chapter 7]{WY92}) that each (non-minimal) $Q$-process can be uniquely  determined,  up to a multiplicative constant,  by a triple of parameters $(\gamma, \beta,\nu)$.  Here, $\gamma,\beta\geq 0$ are two non-negative constants and $\nu=(\nu_k)_{k\in \bN}$ is a positive measure on $\bN$  satisfying the conditions:
 \begin{equation}\label{eq:B1}
	\sum_{k\geq 0} \nu_k \left(\sum_{j=k}^\infty (c_{j+1}-c_j) \sum_{i=0}^j \mu_i \right)<\infty,\quad |\nu| +\beta\neq 0, 
\end{equation}
where $|\nu|:=\sum_{k\in \bN}\nu_k$,  and
\begin{equation}\label{eq:B4}
\beta=0,\quad \text{if }\infty\text{ is an exit}.
\end{equation}
Let $\mathscr Q$ denote the set of all triples $(\gamma, \beta,\nu)$ satisfying \eqref{eq:B1} and \eqref{eq:B4}.  More precisely,  the resolvent matrix 
\begin{equation}\label{eq:28-2}
	\Phi_{ij}(\alpha):=\int_0^\infty e^{-\alpha t}p_{ij}(t)dt,\quad i,j\in \bN, \alpha>0
\end{equation}
of the $Q$-process $(p_{ij}(t))_{i,j\in \bN}$ corresponding to $(\gamma,\beta,\nu)\in \mathscr Q$ can be expressed as (see,  e.g.,  \cite[Theorem~B.1]{L23})
\begin{equation}\label{eq:B2}
	\Phi_{ij}(\alpha):=\Phi^\text{min}_{ij}(\alpha)+u^\text{min}_\alpha(i) \frac{\sum_{k\in \bN} \nu_k \Phi^\text{min}_{kj}(\alpha)+\beta\mu_j u^\text{min}_\alpha(j)}{\gamma+\sum_{k\in \bN}\nu_k(1-u^\text{min}_\alpha(k))+\beta\alpha\sum_{k\in \bN}\mu_k u^\text{min}_\alpha(k)},
\end{equation}
where $(\Phi^\text{min}_{ij})$ is the minimal resolvent matrix \eqref{eq:21} and $u^\text{min}_\alpha$ is defined as \eqref{eq:22}.  The matrix \eqref{eq:B2} is referred to as the \emph{$(Q, \gamma, \beta, \nu)$-resolvent matrix}.   It should be noted that for any constant $M>0$,  the $(Q,M\gamma, M\beta,  M\nu)$-resolvent matrix is identical to the $(Q,  \gamma,\beta,  \nu)$-resolvent matrix.  
 
We need to provide some observations regarding the expression of the resolvent matrix presented in  \eqref{eq:B2}.  First,  the first inequality in \eqref{eq:B1} is equivalent to
\begin{equation}\label{eq:210}
	\alpha\sum_{k,j\in \bN} \nu_k \Phi^\text{min}_{kj}(\alpha)=\sum_{k\in \bN}\nu_k(1-u^\text{min}_\alpha(k))<\infty,\quad \forall \alpha>0\; (\text{equivalently, } \exists \alpha>0); 
\end{equation}
see,  e.g., \cite[\S7.10]{WY92}.  Second,  $\mu(u^\text{min}_\alpha):=\sum_{k\in \bN}\mu_k u^\text{min}_\alpha(k)$ is finite,  if and only if $\infty$ is regular or an entrance; see, e.g.,  \cite[Theorem~7.1]{F59}.  Thus,  the condition \eqref{eq:B4} guarantees that the resolvent matrix \eqref{eq:B2} is well-defined in the exit case.  Third,  when $\gamma>0$ while $\beta=|\nu|=0$,  the resolvent matrix \eqref{eq:B2} reduces to the minimal one (although the second inequality in \eqref{eq:B1} is not satisfied). 

\subsection{Doob processes and Feller $Q$-processes}

In a recent article \cite{L23}, it was demonstrated, using Ray-Knight compactification, that every $Q$-process $X$ possesses a c\`adl\`ag modification $\bar X$ on $\overline{\bN}_\partial$. This modified version is a \emph{Ray process} on $\overline{\bN}_\partial$. In this paper, we do not differentiate between $X$ and its Ray-Knight compactification $\bar X$. Additionally, \cite[Corollary~5.2]{L23} classifies all non-minimal $Q$-processes into two categories: Doob processes and Feller $Q$-processes.

According to, e.g., \cite[Theorem~B.1]{L23},  the $Q$-process  $X$ is a Doob process,  if and only if its determining triple $(\gamma,\beta,\nu)$ belongs to
\[
	\mathscr Q_D:=\{(\gamma,\beta,\nu)\in \mathscr Q: \beta=0,0<|\nu|<\infty\}.  
\]
In this case,  whenever it approaches $\infty$,  the process $X$ refreshes at a randomly determined location from a distribution $\pi$ given by \eqref{eq:28} on $\mathbb{N}_\partial$.
In other words,  $X$ can be obtained by the \emph{piecing out} (see \cite{INW66}) of the minimal $Q$-process $X^\text{min}$ with respect to the \emph{instantaneous distribution} $\pi$,  as established in \cite[\S5]{L23}.  The readers are also referred to \cite[Appendix~A]{L23} for  a detailed description of the piecing out transformation.  

When $(\gamma,\beta,\nu)\in \mathscr Q_F:=\mathscr Q\setminus \mathscr Q_D$,  the corresponding $Q$-process $X$ is a Feller process on $\overline{\bN}$ (with $\partial$ being the cemetery point).  Following \cite{L23}, this $Q$-process will be termed a \emph{Feller $Q$-process}.  The \emph{infinitesimal generator} of the Feller $Q$-process $X$ is derived in \cite[Theorem~6.3]{L23},  and the crucial point is the boundary condition at $\infty$ satisfied by the functions $F$ in the generator domain: 
\begin{equation}\label{eq:21-2}
\frac{\beta}{2} F^+(\infty)+\sum_{k\in \bN}(F(\infty)-F(k))\nu_k +\gamma F(\infty)=0,
\end{equation}
where $F^+$ represents the discrete gradient of $F$ with respect to the scale function $(c_k)_{k\in \bN}$; see \cite[(6.2)]{L23}.  Based on this boundary condition,  the parameters $(\gamma,\beta, \nu)$ can be utilized to explain three possible types of boundary behaviours of $X$ at $\infty$: \emph{killing},  \emph{reflecting}, and \emph{jumping}.  Please refer to \cite[\S2.3]{L23} and \cite{L24} for more details.   


\subsection{Notations in the context of general Markov processes}

Let us introduce some notations for a $Q$-process in the context of general Markov processes.  

Given the transition matrix $(p_{ij}(t))_{t\geq 0}$ and its resolvent matrix $(\Phi_{ij}(\alpha))_{\alpha>0}$ as provided in \eqref{eq:28-2},  we define 
\[
	R_\alpha f(i):=\sum_{j\in \bN}\Phi_{ij}(\alpha)f(i),\; i\in \bN,\quad R_\alpha f(\partial):=0,
\]
for every $f\in C(\overline{\bN})$ (with $f(\partial)=0$) and $\alpha>0$.  According to \eqref{eq:B2},  $R_\alpha f(i)$ for $f\in C(\overline{\bN})$ and $i\in \bN$ can be expressed in terms of the parameters $(\gamma, \beta,\nu)$ as
\begin{equation}\label{eq:215}
	R_\alpha f(i)=R^\text{min}_\alpha f(i)+u^\text{min}_\alpha(i) \frac{\sum_{k\in \bN} \nu_k R^\text{min}_\alpha f(k)+\beta\sum_{j\in \bN}\mu_j u^\text{min}_\alpha(j)f(j)}{\gamma+\sum_{k\in \bN}\nu_k(1-u^\text{min}_\alpha(k))+\beta\alpha\sum_{k\in \bN}\mu_k u^\text{min}_\alpha(k)},
\end{equation}
where $R^\text{min}_\alpha f(i):=\sum_{j\in \bN}\Phi^\text{min}_{ij}(\alpha)f(i)$.  By utilizing \eqref{eq:24} and \eqref{eq:23},  we can obtain that
\begin{equation}\label{eq:214}
	\lim_{i\rightarrow \infty}R_\alpha f(i)=\frac{\sum_{k\in \bN} \nu_k R^\text{min}_\alpha f(k)+\beta\sum_{j\in \bN}\mu_j u^\text{min}_\alpha(j)f(j)}{\gamma+\sum_{k\in \bN}\nu_k(1-u^\text{min}_\alpha(k))+\beta\alpha\sum_{k\in \bN}\mu_k u^\text{min}_\alpha(k)}.
\end{equation}
Define $R_\alpha f(\infty)$ as this limit.  For $f\in C(\overline{\bN}_\partial)$ where $f(\partial)$ may not be equal to $0$,  it holds that $f_0:=f-f(\partial)\in C(\overline{\bN})$.  We define
\[
	R_\alpha f(x):=R_\alpha f_0(x)+\frac{f(\partial)}{\alpha},\quad f\in C(\overline{\bN}_\partial), x\in \overline{\bN}_\partial.
\]
As established in \cite[\S4]{L23},  
\[
 R_\alpha: C(\overline{\bN}_\partial)\rightarrow C(\overline{\bN}_\partial),\quad \alpha>0
\]
 is a \emph{Ray resolvent} in the sense of,  e.g.,  \cite[Definition~8.1]{CW05}.  According to,  e.g.,  \cite[Theorem~8.2]{CW05},  there exists a Borel measurable Markov transition semigroup $(P_t)_{t\geq 0}$ on $\overline{\bN}_\partial$,  having $(R_\alpha)_{\alpha>0}$ as the resolvent,  such that $t\mapsto P_tf(x)$ is right-continuous for all $f\in C(\overline{\bN}_\partial)$ and $x\in \overline{\bN}_\partial$.  This transition semigroup $(P_t)_{t\geq 0}$ is known as a \emph{Ray semigroup}.   Note that for $f\in C(\overline{\bN})$ and $t\geq 0$,  it holds that $P_tf(i)=\sum_{j\in \bN}p_{ij}(t)f(j)$ for $i\in \bN$ and $P_tf(\partial)=0$. 
 
 In the case where $(p_{ij}(t))_{t\geq 0}$ is a Feller $Q$-process,  the semigroup $(P_t)_{t\geq 0}$ is a \emph{Feller semigroup} in the sense that $P_tC(\overline{\bN}_\partial)\subset C(\overline{\bN}_\partial)$ and 
 \begin{equation}\label{eq:222}
 	\lim_{t\rightarrow 0}\|P_tf-f\|_\infty=0,\quad f\in C(\overline{\bN}_\partial).
 \end{equation}
Particularly,  a Feller $Q$-process satisfies the \emph{normal property} on $\overline{\bN}_\partial$,  i.e.,  $P_0(x,\cdot)=\delta_x$ for all $x\in \overline{\bN}_\partial$.  However, in the case where $(p_{ij}(t))_{t\geq 0}$ is a Doob process,  neither \eqref{eq:222} nor the normal property is satisfied.  More precisely,  $P_0(x,\cdot)=\delta_x$ only holds for $x\in \bN_\partial$,  and $\infty$ is a \emph{branching point} of the $Q$-process in the sense of,  e.g.,  \cite[Definition~8.3]{CW05}.  Specifically,  $P_0(\infty, \cdot)=\pi$,  which is given by \eqref{eq:28}; see \cite[Theorem~5.1]{L23}.  

As a Ray process on $\overline{\bN}_\partial$,  the $Q$-process has a.s.  c\`adl\`ag trajectories on $\overline{\bN}_\partial$ according to,  e.g., \cite[Theorem~8.6]{CW05} (or \cite[Theorem~9.13]{S88}).   Therefore, we can define the trajectory space $\Omega$ as the set of all c\`adl\`ag functions $\omega$ from $[0,\infty)$ to $\overline{\bN}_\partial$ such that $\omega(t)=\partial$ for all  $t\geq \zeta(\omega):=\inf\{t\geq 0:\omega(t)=\partial\}$.  We can then define the projection maps
  \[
  X_t: \Omega \rightarrow \overline{\bN}_\partial,  \quad \omega\mapsto \omega(t)
  \]
  for all $t\geq 0$.
  The translation operators $(\theta_t)_{t\geq 0}$ on $\Omega$ are defined by $\theta_t\omega(s):=\omega(t+s)$ for all $t,s\geq 0$.  Let $\sF^0:=\sigma\left\{X_s:s\geq 0\right\}$ and $\sF^0_t:=\sigma\left\{X_s:0\leq s\leq t\right\}$,  the $\sigma$-algebras on $\Omega$ generated by $\{X_s:s\geq 0\}$ and $\{X_s:0\leq s\leq t\}$,  respectively.  These $\sigma$-algebras are known as the \emph{natural filtration} on $\Omega$.  According to \cite[Theorem~8.6]{CW05} (see also \cite[Theorem~9.13]{S88}),  for any probability measure $\lambda$ on $\overline{\bN}_\partial$,  there exists a probability measure $\bP_\lambda$ on $(\Omega, \sF^0)$ such that 
  \[
  	\left(\Omega, \sF^0,\sF^0_t, X_t, \theta_t, \bP_\lambda \right)
  \]
 forms a Markov process on $\overline{\bN}_\partial$ with initial distribution $\lambda P_0$ (not $\lambda$!) and transition semigroup $(P_t)_{t\geq 0}$. Here,  $\lambda P_0(A):=\int_{\overline{\bN}_\partial}P_0(x,A)\lambda(dx)$ for $A\subset \overline{\bN}_\partial$.  If $\lambda=\delta_x$ for $x\in \overline{\bN}_\partial$,  we write $\bP_\lambda$ as $\bP_x$.  Additionally, note that $\bP_\lambda(\cdot)=\int_{\overline{\bN}_\partial} \bP_x(\cdot)\lambda(dx)$. The natural filtration $(\sF^0,\sF^0_t)$ can be augmented using the standard approach described in \cite[I\S6]{S88}, resulting in the \emph{augmented natural filtration} $(\sF,\sF_t)$ on $\Omega$.  Finally,  we  obtain a collection
  \begin{equation}\label{eq:213}
  	X=\left(\Omega, \sF,\sF_t, X_t,\theta_t, \bP_x \right),
  \end{equation}
 which forms a \emph{realization} of the $Q$-process $(p_{ij}(t))_{i,j\in \bN}$.  
  
The lifetime of $X$ is $\zeta=\inf\{t>0:X_t=\partial\}$.  Note that $\bP_\partial(X_t=\partial,\forall t\geq 0)=1$ and $\bP_i(X_0=i)=1$ for all $i\in \bN$.  In the case where $X$ is a Feller $Q$-process,  it also holds that $\bP_\infty(X_0=\infty)=1$.  In contrast,  for a Doob process, $\bP_\infty(X_0\in \cdot)=\pi$ since $P_0(\infty, \cdot)=\pi$.  However,  when the Doob process is restricted to $\bN_\partial$, it transforms into a \emph{Borel right process} that satisfies the normal property (see \cite[Theorem~9.13]{S88}). It is worth noting that in the context of Borel right process, the cemetery point $\partial$ is commonly considered as the compactification point of the state space $\bN$, which slightly differs from the setup in  \S\ref{SEC21}.

 \subsection{Realization on Skorohod topological space}\label{SEC26}

Let $D_{\overline{\bN}_\partial}[0,\infty)$ denote the set of all c\`adl\`ag functions from $[0,\infty)$ to $\overline{\bN}_\partial$.  
According to \cite[\S3, Theorem~5.6]{EK09},   the space $D_{\overline{\bN}_\partial}[0,\infty)$ equipped with the metric $d$ inducing the \emph{Skorohod topology}  (defined in \cite[\S3(5.2)]{EK09})  is a complete,  separable metric space.  In addition,  utilizing \cite[\S3, Proposition~7.1]{EK09},  we can identify the Borel $\sigma$-algebra $\mathscr{B}\left(D_{\overline{\bN}_\partial}[0,\infty);d\right)$ on $D_{\overline{\bN}_\partial}[0,\infty)$ with respect to the Skorohod topology as $\sigma\{\pi_t:t\geq 0\}$,  which is generated by all the projection maps $\pi_t:D_{\overline{\bN}_\partial}[0,\infty)\rightarrow \overline{\bN}_\partial,  w\mapsto w(t)$.  
 
It is evident that $\Omega\subset D_{\overline{\bN}_\partial}[0,\infty)$,   where $\Omega$ is the trajectory space in the realization \eqref{eq:213} of the $Q$-process.  Since $\mathscr{B}\left(D_{\overline{\bN}_\partial}[0,\infty);d\right)=\sigma\{\pi_t:t\geq 0\}$,  it is straightforward to verify that the embedding map
\begin{equation}\label{eq:217}
	\mathcal X: (\Omega, \sF)\rightarrow \left(D_{\overline{\bN}_\partial}[0,\infty), \mathscr{B}\left(D_{\overline{\bN}_\partial}[0,\infty)\right);d \right),\quad \omega\mapsto X_\cdot (\omega)
\end{equation}
is measurable.  Thus,  for any probability measure $\lambda$ on $\overline{\bN}_\partial$,  $\bP_\lambda$ induces an image probability measure $\bP_\lambda \circ \mathcal{X}^{-1}$  on $\left(D_{\overline{\bN}_\partial}[0,\infty), \mathscr{B}\left(D_{\overline{\bN}_\partial}[0,\infty)\right);d \right)$.  Since $\bP_\lambda\circ \mathcal{X}^{-1}$ can be regarded as the extension of $\bP_\lambda$ to $D_{\overline{\bN}_\partial}[0,\infty)$ by defining $\bP_\lambda(D_{\overline{\bN}_\partial}[0,\infty)\setminus \Omega):=0$,  we will still denote this image measure by $\bP_\lambda$ if no ambiguity arises.

 \section{Convergence of resolvents}\label{SEC31}
 
Consider a $Q$-process $X$ with parameters $(\gamma, \beta, \nu) \in \mathscr{Q}$, which determine its resolvent matrix. Define $u_\alpha(k):=\bE_ke^{-\alpha \zeta}$ for $k\in \bN$ and $\alpha$, where $\zeta$ is the lifetime of $X$.   Additionally, consider a sequence of $Q$-processes $\{X^{(n)}: n \geq 1\}$, with parameters denoted by $(\gamma^{(n)}, \beta^{(n)}, \nu^{(n)}) \in \mathscr{Q}$. Symbols related to this sequence will be distinguished by the superscript $\text{}^{(n)}$.  For example, the realization of $X^{(n)}$ can be denoted by
\[
X^{(n)} = \left(\Omega, \sF^{(n)}, \sF^{(n)}_t, X_t, \theta_t, \bP^{(n)}_x \right),
\]
where the lifetime is $\zeta^{(n)}$. The semigroup and resolvent of $X^{(n)}$ are denoted by $(P^{(n)}_t)_{t\geq 0}$ and $(R^{(n)}_\alpha)_{\alpha>0}$, respectively,  and $u^{(n)}_\alpha(k):=\bE^{(n)}_k e^{-\alpha \zeta^{(n)}}$ for $k\in \bN$ and $\alpha>0$.  

In this section,  our aim is to clarify the relationships between the convergence of the transition matrices, the resolvent matrices, the transition semigroups, and the resolvents for $X^{(n)}$.  Among these convergences, the resolvent convergence is comparatively clear and straightforward,  according to the resolvent representation \eqref{eq:215}.

A simple case of Kurtz's lemma \cite[Lemma~2.11]{K69},  as stated below,  will be useful in proving our results.

\begin{lemma}\label{LM91}
Let $\mathbf{B}$ be a Banach space with the norm $\|\cdot\|$.  Suppose that for each $n$,  $F_n(t)$ is a function of $0\leq t<\infty$ taking values in $\mathbf{B}$, and that $F_n(t)$ forms a bounded,  equicontinuous sequence in the sense that:
\begin{itemize}
\item[(i)] There exists $M>0$ such that $\|F_n(t)\|\leq M$ for all $n$ and $t\geq 0$.
\item[(ii)] For every $\varepsilon>0$ and $t\geq 0$,  there exists $\delta>0$ such that $t'\geq 0$ with $|t-t'|<\delta$ implies $\|F_n(t)-F_n(t')\|<\varepsilon$ for all $n$.  
\end{itemize}  
Then
\begin{equation}\label{eq:90}
\lim_{n\rightarrow \infty} \left\| \int_0^\infty e^{-\alpha t}F_n(t)dt\right\|=0\quad \text{for all }\alpha >0
\end{equation}
implies 
\[
	\lim_{n\rightarrow \infty} \sup_{0\leq t\leq T}\|F_n(t)\|=0\quad \text{for all }T>0.   
\]
\end{lemma}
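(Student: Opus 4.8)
The plan is to argue by contradiction and to reduce the Banach-valued statement to a scalar one via Hahn--Banach. The obstruction to deducing pointwise smallness of $F_n$ from smallness of its Laplace transform is the possible cancellation inside the vector-valued integral $\int_0^\infty e^{-\alpha t}F_n(t)\,dt$, and the scalarization is designed precisely to remove this difficulty while preserving the lower bound at a bad point.

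Suppose the conclusion fails. Then there exist $T>0$, $\varepsilon_0>0$, a subsequence $(n_k)$ and times $t_k\in[0,T]$ with $\|F_{n_k}(t_k)\|\geq\varepsilon_0$. For each $k$ I choose, by Hahn--Banach, a functional $\phi_k\in\mathbf{B}^*$ with $\|\phi_k\|=1$ and $\phi_k(F_{n_k}(t_k))=\|F_{n_k}(t_k)\|\geq\varepsilon_0$, and set $g_k(s):=\phi_k(F_{n_k}(s))$, a real-valued function on $[0,\infty)$. Condition (i) gives $|g_k|\leq M$; condition (ii) gives that $(g_k)$ inherits the equicontinuity of $(F_n)$, since $|g_k(s)-g_k(s')|\leq\|F_{n_k}(s)-F_{n_k}(s')\|$; and by construction $g_k(t_k)\geq\varepsilon_0$. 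Applying $\phi_k$ to the hypothesis \eqref{eq:90}, commuting it with the (Bochner) integral, and using $\|\phi_k\|=1$ yields $\int_0^\infty e^{-\alpha s}g_k(s)\,ds\to0$ for every $\alpha>0$.

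Next I would extract a limit. On each compact $[0,N]$ the pointwise equicontinuity uniform in $k$ upgrades, by a standard finite-subcover argument, to uniform equicontinuity; together with the uniform bound $M$ this lets Arzel\`a--Ascoli and a diagonal procedure produce a further subsequence along which $g_k\to g$ uniformly on compact subsets of $[0,\infty)$, with $g$ continuous and $|g|\leq M$. Passing to a subsequence so that $t_k\to t_\ast\in[0,T]$, uniform convergence near $t_\ast$ gives $g(t_\ast)=\lim_k g_k(t_k)\geq\varepsilon_0>0$. A dominated-convergence estimate, splitting the integral at a large $T'$ and bounding the tail by $\tfrac{M}{\alpha}e^{-\alpha T'}$ uniformly in $k$, shows $\int_0^\infty e^{-\alpha s}g(s)\,ds=\lim_k\int_0^\infty e^{-\alpha s}g_k(s)\,ds=0$ for all $\alpha>0$. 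By Lerch's uniqueness theorem for the Laplace transform, $g\equiv0$, contradicting $g(t_\ast)\geq\varepsilon_0$.

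The \emph{main obstacle} is exactly the first step: because $\mathbf{B}$ may be infinite-dimensional, the values $F_n(t)$ need not lie in a compact set, so Arzel\`a--Ascoli cannot be applied to the vector-valued functions directly, and smallness of the norm of the integral does not control the integrand pointwise. The Hahn--Banach scalarization circumvents both difficulties at once. The one subtlety to keep in mind is that the functionals $\phi_k$ depend on $k$; this causes no trouble, because Arzel\`a--Ascoli is applied to the scalar sequence $(g_k)$ itself rather than to functionals of a single fixed limit, and the lower bound $g_k(t_k)\geq\varepsilon_0$ survives the passage to the limit.
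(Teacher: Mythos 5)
Your proof is correct. Note that the paper itself does not prove this lemma at all: it simply cites it as a special case of Kurtz's Lemma 2.11 in \cite{K69}, so there is no in-paper argument to compare against. Kurtz's original proof runs along different lines (it works for $\alpha$ ranging over a suitable sequence and rests on the density of linear combinations of the exponentials $e^{-\alpha t}$, which are used to build kernels approximating point evaluation, combined with the equicontinuity). Your route --- contradiction, Hahn--Banach scalarization $g_k(s)=\phi_k(F_{n_k}(s))$ with $\|\phi_k\|=1$ and $\phi_k(F_{n_k}(t_k))=\|F_{n_k}(t_k)\|$, upgrade of pointwise to uniform equicontinuity on compacts, Arzel\`a--Ascoli plus a diagonal extraction, passage to the limit in the Laplace transform by dominated convergence, and Lerch's uniqueness theorem --- is a genuinely different, self-contained, and more elementary argument for the ``all $\alpha>0$'' version actually stated here. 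All the delicate points are handled properly: the functionals $\phi_k$ vary with $k$ but only the scalar sequence $(g_k)$ is compactified; the lower bound survives the limit because $|g_k(t_k)-g(t_\ast)|\leq\sup_{[0,T]}|g_k-g|+|g(t_k)-g(t_\ast)|$; and continuity of $F_n$ (hence strong measurability and Bochner integrability of $e^{-\alpha t}F_n(t)$) follows from hypothesis (ii), which justifies commuting $\phi_k$ with the integral. What your approach buys is independence from the density-of-exponentials machinery; what it gives up is the sharper hypothesis in Kurtz (vanishing of the transform only along a suitable sequence of $\alpha$'s), which is not needed in this paper.
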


Now we are in a position to present our first result regarding the equivalent conditions for resolvent convergence.

\begin{theorem}\label{THM42}
The following convergences are all equivalent to each other:
\begin{itemize}
\item[(1a)] For some $k\in \bN$ (or equivalently,  for all $k\in \bN$),  it holds that $\lim_{n\rightarrow \infty} R^{(n)}_\alpha f(k)=R_\alpha f(k)$ for all $f\in C(\overline{\bN})$ and $\alpha>0$.
\item[(1b)] $\lim_{n\rightarrow \infty} \|R^{(n)}_\alpha f-R_\alpha f\|_\infty=0$ for all  $f\in C(\overline{\bN})$ and $\alpha>0$. 
\item[(1c)] $\lim_{n\rightarrow \infty} R^{(n)}_\alpha f(\infty)=R_\alpha f(\infty)$ for all  $f\in C(\overline{\bN})$ and $\alpha>0$. 
\item[(2a)] For some $k\in \bN$ (or equivalently,  for all $k\in \bN$),  it holds that $\lim_{n\rightarrow \infty}\Phi^{(n)}_{kj}(\alpha)=\Phi_{kj}(\alpha)$ for all $j\in \bN$ and $\alpha>0$, and $\lim_{n\rightarrow \infty}u^{(n)}_\alpha(k)=u_\alpha(k)$ for all $\alpha>0$.  
\item[(2b)] It holds that $\lim_{n\rightarrow \infty}\sup_{k\in \bN}|\Phi^{(n)}_{kj}(\alpha)-\Phi_{kj}(\alpha)|=0$ for all $j\in \bN$ and $\alpha>0$,  and $\lim_{n\rightarrow \infty}\sup_{k\in \bN}|u^{(n)}_\alpha(k)-u_\alpha(k)|=0$ for all $\alpha>0$.  
\item[(3a)] For some $k\in \bN$ (or equivalently,  for all $k\in \bN$),  it holds that $\lim_{n\rightarrow \infty}P^{(n)}_tf(k)=P_tf(k)$ for all $f\in C_0(\bN)$ and $t\geq 0$, and $\lim_{n\rightarrow \infty}u^{(n)}_\alpha(k)=u_\alpha(k)$ for all $\alpha>0$.
\item[(3b)] For some $k\in \bN$ (or equivalently,  for all $k\in \bN$),  it holds that 
\[
	\lim_{n\rightarrow \infty}\sup_{t\in [0,T]}|P^{(n)}_tf(k)-P_tf(k)|=0
\]
for all $f\in C_0(\bN)$ and $T\geq 0$, and $\lim_{n\rightarrow \infty}u^{(n)}_\alpha(k)=u_\alpha(k)$ for all $\alpha>0$.
\item[(4a)] For some $k\in \bN$ (or equivalently,  for all $k\in \bN$),  it holds that $\lim_{n\rightarrow \infty}p^{(n)}_{kj}(t)=p_{kj}(t)$ for all $j\in \bN$  and $t\geq 0$, and $\lim_{n\rightarrow \infty}u^{(n)}_\alpha(k)=u_\alpha(k)$ for all $\alpha>0$.  
\item[(4b)] For some $k\in \bN$ (or equivalently,  for all $k\in \bN$),  it holds that 
\begin{equation}\label{eq:42}
\lim_{n\rightarrow \infty}\sup_{t\in [0,T]}|p^{(n)}_{kj}(t)-p_{kj}(t)|=0
\end{equation}
for all $j\in \bN$ and $T\geq 0$, and $\lim_{n\rightarrow \infty}u^{(n)}_\alpha(k)=u_\alpha(k)$ for all $\alpha>0$.  
\end{itemize}
\end{theorem}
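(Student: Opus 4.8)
The key simplification I would exploit at the outset is that every process in the family shares the same density matrix $Q$, hence the same minimal data $\Phi^\text{min}_{ij}(\alpha)$, $u^\text{min}_\alpha$, $\mu$ and scale $(c_k)$; only the triple $(\gamma,\beta,\nu)$ varies with $n$. Consequently \eqref{eq:B2} and \eqref{eq:215} exhibit each quantity as the fixed minimal part plus the factor $u^\text{min}_\alpha(i)$ times a constant \emph{independent of $i$}. Writing $g_j(\alpha)$ for the $i$-free factor in \eqref{eq:B2}, one has $\Phi_{ij}(\alpha)=\Phi^\text{min}_{ij}(\alpha)+u^\text{min}_\alpha(i)g_j(\alpha)$ and, likewise, $R_\alpha f(i)=R^\text{min}_\alpha f(i)+u^\text{min}_\alpha(i)R_\alpha f(\infty)$ with $R_\alpha f(\infty)$ given by \eqref{eq:214}. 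Moreover, using the analogue $1-u_\alpha(i)=\alpha\sum_{j}\Phi_{ij}(\alpha)$ of \eqref{eq:24} (probabilistically clear, since $\sum_j\Phi_{ij}(\alpha)=\bE_i\int_0^\zeta e^{-\alpha t}dt$) and computing $\alpha\sum_j\Phi_{ij}(\alpha)$ from \eqref{eq:B2} via \eqref{eq:24}, one finds $u_\alpha(i)=u^\text{min}_\alpha(i)\cdot\gamma/D$, where $D>0$ is the $i$-free denominator of \eqref{eq:B2}. Since $\infty$ is regular or an exit we have $u^\text{min}_\alpha(k)>0$ for every $k$ and $u^\text{min}_\alpha(k)\uparrow1$ by \eqref{eq:23}. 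Thus each of $R^{(n)}_\alpha f(k)-R_\alpha f(k)$, $\Phi^{(n)}_{kj}(\alpha)-\Phi_{kj}(\alpha)$, $u^{(n)}_\alpha(k)-u_\alpha(k)$ equals $u^\text{min}_\alpha(k)$ times a $k$-free quantity; as $u^\text{min}_\alpha\le1$ with supremum $1$, the sup-norm over $\overline{\bN}_\partial$ of each difference equals its value ``at $\infty$''. This single observation disposes of (1a)$\Leftrightarrow$(1b)$\Leftrightarrow$(1c) and (2a)$\Leftrightarrow$(2b), and of every ``for some $k$, equivalently for all $k$'' clause.

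Next I would prove (1a)$\Leftrightarrow$(2a). Because the points of $\bN$ are isolated in $\overline{\bN}$, both $\mathbf 1_{\{j\}}$ and the constant $\mathbf 1_{\overline{\bN}}$ (equal to $1$ on $\overline{\bN}$, to $0$ at $\partial$) lie in $C(\overline{\bN})$. Testing (1a) against $\mathbf 1_{\{j\}}$ gives $\Phi^{(n)}_{kj}(\alpha)\to\Phi_{kj}(\alpha)$, while testing against $\mathbf 1_{\overline{\bN}}$ gives, through $R_\alpha\mathbf 1_{\overline{\bN}}(k)=\sum_j\Phi_{kj}(\alpha)=\alpha^{-1}(1-u_\alpha(k))$, the convergence $u^{(n)}_\alpha(k)\to u_\alpha(k)$; hence (1a)$\Rightarrow$(2a). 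This is precisely why (1) must use $C(\overline{\bN})$ and not $C_0(\bN)$: the constant function is what encodes $u_\alpha$, whereas for (3a) with test class $C_0(\bN)$ the $u$-convergence must be imposed separately. For the converse, from (2a) I have $\Phi^{(n)}_{kj}(\alpha)\to\Phi_{kj}(\alpha)$ for each $j$ together with $\sum_j\Phi^{(n)}_{kj}(\alpha)=\alpha^{-1}(1-u^{(n)}_\alpha(k))\to\alpha^{-1}(1-u_\alpha(k))=\sum_j\Phi_{kj}(\alpha)$; as the summands are nonnegative, Scheff\'e's lemma upgrades this to $\ell^1$-convergence $\sum_j|\Phi^{(n)}_{kj}(\alpha)-\Phi_{kj}(\alpha)|\to0$, whence $|R^{(n)}_\alpha f(k)-R_\alpha f(k)|\le\|f\|_\infty\sum_j|\Phi^{(n)}_{kj}-\Phi_{kj}|\to0$ for every bounded $f$, which is (1a).

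It remains to tie (2a) to the semigroup and transition-matrix statements. One direction is soft: if $p^{(n)}_{kj}(t)\to p_{kj}(t)$ for all $t$ (resp.\ $P^{(n)}_tf(k)\to P_tf(k)$), dominated convergence in the Laplace transform \eqref{eq:28-2} yields $\Phi^{(n)}_{kj}(\alpha)\to\Phi_{kj}(\alpha)$ (resp.\ $R^{(n)}_\alpha f(k)\to R_\alpha f(k)$), and combined with the $u$-convergence assumed in (3a)/(4a) this gives (2a); the uniform statements (3b)/(4b) trivially imply (3a)/(4a). The substantive direction is (2a)$\Rightarrow$(4b) and (2a)$\Rightarrow$(3b), obtained from Kurtz's lemma (Lemma~\ref{LM91}) applied to $F_n(t):=p^{(n)}_{kj}(t)-p_{kj}(t)$ (resp.\ $P^{(n)}_tf(k)-P_tf(k)$): hypothesis \eqref{eq:90} is exactly the resolvent convergence already secured (via the Scheff\'e step, $R^{(n)}_\alpha f(k)\to R_\alpha f(k)$ for $f\in C_0(\bN)\subset C(\overline{\bN})$), and the conclusion is precisely the uniform convergence \eqref{eq:42}.

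\textbf{Main obstacle.} The crux is verifying the uniform-in-$n$ equicontinuity hypothesis (ii) of Kurtz's lemma, and this is exactly where the common matrix $Q$ is indispensable. The holding rate at $i$ is $q_i=a_i+b_i<\infty$ for \emph{every} process, so $p^{(n)}_{ii}(h)\ge e^{-q_ih}$ (the no-jump event), giving $1-p^{(n)}_{ii}(h)\le q_ih$ uniformly in $n$. Combined with the Chapman--Kolmogorov decomposition $p_{kj}(t+h)-p_{kj}(t)=-(1-p_{kk}(h))p_{kj}(t)+\sum_{l\neq k}p_{kl}(h)p_{lj}(t)$, this yields $|p^{(n)}_{kj}(t+h)-p^{(n)}_{kj}(t)|\le2(1-p^{(n)}_{kk}(h))\le2q_kh$, and the same bound (with an extra $\|f\|_\infty$, using $\|P^{(n)}_tf\|_\infty\le\|f\|_\infty$) for $P^{(n)}_tf(k)$. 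This uniform equicontinuity, together with the uniform bound $|F_n|\le2\|f\|_\infty$, lets Kurtz's lemma close the remaining implications and complete the cycle of equivalences.
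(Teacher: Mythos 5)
Your proposal is correct and follows essentially the same route as the paper: the structural identity $\Phi_{ij}(\alpha)=\Phi^{\mathrm{min}}_{ij}(\alpha)+u^{\mathrm{min}}_\alpha(i)g_j(\alpha)$ reduces all the pointwise/uniform and ``some $k$ / all $k$'' equivalences to a single $k$-free quantity, testing against $\mathbf{1}_{\{j\}}$ and $\mathbf{1}_{\overline{\bN}}$ links (1) and (2), and Kurtz's lemma with the equicontinuity bound $|p^{(n)}_{kj}(t)-p^{(n)}_{kj}(t')|\le q_k|t-t'|$ (uniform in $n$ because all processes share the same $Q$) closes the cycle, exactly as in the paper. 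The only genuine local variations --- using Scheff\'e's lemma to upgrade entrywise convergence of $\Phi^{(n)}_{k\cdot}(\alpha)$ to $\ell^1$-convergence in place of the paper's density argument via $C_c(\bN)\subset C_0(\bN)$, and applying Kurtz directly to $P^{(n)}_tf(k)-P_tf(k)$ rather than routing (4b)$\Rightarrow$(3b) through another density step --- are both valid and, if anything, slightly cleaner.
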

\begin{proof}
The equivalence between (1a),  (1b),  and (1c) can be easily verified by considering the following facts: $R^{(n)}_\alpha f, R_\alpha f\in C(\overline{\bN})$ for $f\in C(\overline{\bN})$ and $\alpha>0$; according to \eqref{eq:215},
\[
	R^{(n)}_\alpha f(k)-R_\alpha f(k)=u^\text{min}_\alpha(k)\cdot \left(R^{(n)}_\alpha f(\infty)-R_\alpha f(\infty) \right),\quad k\in \bN;
\]
in addition, $0<u_\alpha^\text{min}(k)<1$ for all $k\in \bN$.  

Clearly,  (2b) implies (2a).  By taking $f=1_{\{j\}}$ and $f=1_{\overline{\bN}}$ in (1b),  we obtain 
\[
	\lim_{n\rightarrow \infty}\sup_{k\in \bN}|\Phi^{(n)}_{kj}(\alpha)-\Phi_{kj}(\alpha)|=0\quad\text{and}\quad \lim_{n\rightarrow \infty}\sup_{k\in \bN}|u^{(n)}_\alpha(k)-u_\alpha(k)|=0,
\] 
respectively.  Thus,  (1b) indicates (2b).  Now,  we will demonstrate that (2a) implies (1a), thereby establishing the equivalence between (1a), (1b), (1c), (2a), and (2b).  Suppose that for some $k\in \bN$, it holds that $\lim_{n\rightarrow \infty}\Phi^{(n)}_{kj}(\alpha)=\Phi_{kj}(\alpha)$ for all $j\in \bN$ and $\lim_{n\rightarrow \infty}u^{(n)}_\alpha(k)=u_\alpha(k)$.  For any $g\in C_c(\bN)$,  there exists an integer $M$ such that $g(m)=0$ for all $m>M$.  Therefore, we have
\[
	\lim_{n\rightarrow \infty}R^{(n)}_\alpha g(k)=\lim_{n\rightarrow \infty}\sum_{0\leq m\leq M}\Phi^{(n)}_{km}(\alpha)g(m)=\sum_{0\leq m\leq M}\Phi_{km}(\alpha)g(m)=R_\alpha g(k).
\]
Note that $C_c(\bN)$ is dense in $C_0(\bN)$,  and $\|R^{(n)}_\alpha g\|_\infty\leq \frac{1}{\alpha}\|g\|_\infty,  \|R_\alpha g\|_\infty\leq \frac{1}{\alpha}\|g\|_\infty$ for all $g\in C(\overline{\bN})$.  It is straightforward to further obtain that $\lim_{n\rightarrow \infty}R^{(n)}_\alpha g(k)=R_\alpha g(k)$ for all $g\in C_0(\bN)$.  Taking $f\in C(\overline{\bN})$,  and defining $f_0:=f-f(\infty)\cdot 1_{\overline{\bN}}\in C_0(\bN)$,  we observe that
\[
	R^{(n)}_\alpha 1_{\overline{\bN}}(k)=\frac{1}{\alpha}\left(1-u_\alpha^{(n)}(k)\right),\quad R_\alpha 1_{\overline{\bN}}(k)=\frac{1}{\alpha}\left(1-u_\alpha(k)\right).
\]
From this, we can deduce that
\[
\begin{aligned}
\lim_{n\rightarrow \infty}R^{(n)}_\alpha f(k)&=\lim_{n\rightarrow \infty}\left(R^{(n)}_\alpha f_0(k)+\frac{f(\infty)}{\alpha} \left(1-u_\alpha^{(n)}(k)\right)\right) \\
&=R_\alpha f_0(k)+\frac{f(\infty)}{\alpha} \left(1-u_\alpha(k)\right)=R_\alpha f(k).
\end{aligned}\] 
Consequently,  (1a) holds true.

Next,  we will establish the equivalence between (4a),  (4b), and (2a).  Clearly,  (4b) implies (4a),  and (4a) implies (2a) (by the dominated convergence theorem).  Suppose that (2a) holds.  In order to conclude (4b),  our goal is to apply Lemma~\ref{LM91} with $\mathbf{B}=\mathbb{R}$ and $F_n(t):=p^{(n)}_{kj}(t)-p_{kj}(t)$.  It is sufficient to verify condition (ii) of Lemma~\ref{LM91}.  In fact,   based on \cite[II\S3, Theorem~1]{C67},  for any $t,t'\geq 0$,  we have
\begin{equation}\label{eq:96}
	|F_n(t)-F_n(t')|\leq |p^{(n)}_{kj}(t)-p^{(n)}_{kj}(t')|+|p_{kj}(t)-p_{kj}(t')|\leq 2q_k|t-t'|. 
\end{equation}
Hence, the equcontinuity of $F_n(t)$ can be easily obtained.  

For the remaining conditions (3a) and (3b),  it is worth noting that (3b) implies (3a),  and (3a) implies (4a) (by taking $f=1_{\{j\}}$ in (3a)). Additionally, (4a) implies (4b).  Furthermore,  we can use the same argument as the one used to show that (2a) implies (1a) to deduce that (4b) implies (3b).  Therefore, the equivalence of all conditions has been established.
\end{proof}
\begin{remark}
In discussions about general Markov processes, it is commonly assumed that functions take the value of $0$ at the cemetery state $\partial$. This assumption is the reason why we consider $f \in C(\overline{\bN})$ in the theorem above. However, in the subsequent discussion in \S\ref{SEC4}, we will examine the weak convergence of a family of probability measures on $D_{\overline{\bN}_\partial}[0, \infty)$, where it becomes necessary to address functions that are not equal to $0$ at $\partial$.  Fortunately, it is easy to adjust the conditions in the theorem to apply to $f \in C(\overline{\mathbb{N}}_\partial)$. To keep the presentation simple, we will not elaborate on this here.
\end{remark}

In conditions (3a) and (3b) that describe the convergence of the transition semigroups, it is somewhat limiting that $f$ can only be chosen from functions in $C_0(\bN)$ rather than from all functions in $C(\overline{\bN})$.  Moreover, there is a lack of characterization for equivalent conditions where $k$ has uniformity.  These limitations render it inadequate to guarantee the convergence of the finite-dimensional distributions of $X^{(n)}$ to the corresponding finite-dimensional distributions of $X$.  The challenge here is that if we plan to apply Lemma~\ref{LM91} with $\mathbf{B} = C(\overline{\bN})$ and $F(t) := P^{(n)}_t f - P_t f$, neither $X^{(n)}$ nor $X$ is necessarily a Feller $Q$-process, which does not always guarantee that $F(t) \in \mathbf{B}$. To address this difficulty, we need to take an alternative approach using Theorem~\ref{THM64}.  We will revisit this issue in \S\ref{SEC4}.

 \section{Infinitesimal generators of Doob processes} \label{SEC5}
 
 In this section,  we examine the Doob process, whose resolvent matrix is determined by the parameters \((\gamma, 0, \nu) \in \mathscr{Q}_D\).  All other notation related to $X$ is consistent with that in \S\ref{SEC2}.
 Note that the resolvent $R_\alpha: C(\overline{\bN}) \rightarrow C(\overline{\bN})$ does not satisfy the strong continuity.  That is, $\lim_{\alpha \rightarrow \infty} \|\alpha R_\alpha f - f\|_\infty = 0$ does not hold for all $f \in C(\overline{\bN})$.  However, we will demonstrate that the strong continuity does hold in a smaller Banach space, enabling the transition semigroup $(P_t)_{t\geq 0}$ of the Doob process to form a ``Feller semigroup" on this Banach space.   
 
 We first introduce a lemma,  which states that the minimal $Q$-process is a Feller process on $\bN$ (with $\partial$ being the cemetery).  
 
 \begin{lemma}\label{LM31}
 The transition semigroup $(P^\text{min}_t)_{t\geq 0}$ of $X^\text{min}$ acts on $C_0(\bN)$ as a strongly continuous contractive semigroup,  i.e.,  
 \begin{equation}\label{eq:32}
 	P^\text{min}_t f\in C_0(\bN),\quad \lim_{t\rightarrow 0}\|P_t^\text{min} f-f\|_\infty=0
 \end{equation}
 for all $f\in C_0(\bN)$.  
 \end{lemma}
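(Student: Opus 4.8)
The plan is to prove both assertions in \eqref{eq:32} by transferring them from the resolvent to the semigroup. The strategy is to verify that the restriction of $(R^\text{min}_\alpha)_{\alpha>0}$ to the Banach space $\big(C_0(\bN),\|\cdot\|_\infty\big)$ is a strongly continuous contraction resolvent — that is, it maps $C_0(\bN)$ into itself, it is a sub-Markovian contraction pseudo-resolvent, and $\lim_{\alpha\to\infty}\|\alpha R^\text{min}_\alpha f-f\|_\infty=0$ for every $f\in C_0(\bN)$ — and then to appeal to the Hille--Yosida theorem. The latter produces a unique strongly continuous contraction semigroup $(T_t)_{t\ge0}$ on $C_0(\bN)$ whose Laplace transform is $(R^\text{min}_\alpha)$; matching it with $(P^\text{min}_t)$ via uniqueness of Laplace transforms then yields both the Feller property and the strong continuity claimed in \eqref{eq:32}.

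First I would check that $R^\text{min}_\alpha$ preserves $C_0(\bN)$. For bounded $f$, the sub-Markov estimate $|R^\text{min}_\alpha f(i)|\le\|f\|_\infty\sum_{j\in\bN}\Phi^\text{min}_{ij}(\alpha)=\|f\|_\infty\,\alpha^{-1}\big(1-u^\text{min}_\alpha(i)\big)$, obtained from \eqref{eq:24}, combined with \eqref{eq:23} gives $\lim_{i\to\infty}R^\text{min}_\alpha f(i)=0$; since $R^\text{min}_\alpha f(\partial)=0$ by definition and $\bN_\partial$ is discrete (so continuity is automatic), this shows $R^\text{min}_\alpha f\in C_0(\bN)$. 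The contraction bound $\|\alpha R^\text{min}_\alpha\|\le1$ and the resolvent equation hold automatically, since $(R^\text{min}_\alpha)$ is the Laplace transform of the sub-Markovian transition function $(p^\text{min}_{ij}(t))$.

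The main obstacle is the strong continuity $\lim_{\alpha\to\infty}\|\alpha R^\text{min}_\alpha f-f\|_\infty=0$, the difficulty being the \emph{uniformity} in $i\in\bN$ (pointwise convergence is routine via an Abelian theorem). By the uniform bound $\|\alpha R^\text{min}_\alpha\|\le1$ and the density of $C_c(\bN)$ in $C_0(\bN)$, it suffices to treat $f\in C_c(\bN)$. For such $f$ I would put the $Q$-matrix onto $f$ rather than onto the resolvent image: writing $Qf(k):=a_kf(k-1)-q_kf(k)+b_kf(k+1)$, the Kolmogorov \emph{forward} equation for the minimal process gives the identity $\alpha R^\text{min}_\alpha f-f=R^\text{min}_\alpha(Qf)$, whence $\|\alpha R^\text{min}_\alpha f-f\|_\infty=\|R^\text{min}_\alpha(Qf)\|_\infty\le\alpha^{-1}\|Qf\|_\infty\to0$, because $Qf\in C_c(\bN)$ is bounded. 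The crucial input is the validity of the forward equation for the minimal transition function; I would deduce it from the $\mu$-symmetry of $X^\text{min}$, namely the detailed-balance relation $\mu_kq_{kj}=\mu_jq_{jk}$, which converts the (always valid) backward equation into the forward one. The interchange of summations used in this identity is harmless, since $q_{kj}\ne0$ only for $|k-j|\le1$ and $f$ has finite support.

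Finally, with $(R^\text{min}_\alpha)$ established as a strongly continuous contraction resolvent on $C_0(\bN)$, the Hille--Yosida theorem (e.g.\ \cite[Chapter~1]{EK09}) furnishes a strongly continuous contraction semigroup $(T_t)_{t\ge0}$ on $C_0(\bN)$ with $\int_0^\infty e^{-\alpha t}T_tf\,dt=R^\text{min}_\alpha f$. For $f\in C_c(\bN)$ and each $i\in\bN$, the functions $t\mapsto T_tf(i)$ and $t\mapsto P^\text{min}_tf(i)=\sum_{j}p^\text{min}_{ij}(t)f(j)$ are continuous and bounded with the same Laplace transform $R^\text{min}_\alpha f(i)$, hence coincide; a contraction-plus-density argument extends the equality to all $f\in C_0(\bN)$. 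Therefore $P^\text{min}_tf=T_tf\in C_0(\bN)$ and $\lim_{t\to0}\|P^\text{min}_tf-f\|_\infty=\lim_{t\to0}\|T_tf-f\|_\infty=0$, which are exactly the two assertions in \eqref{eq:32}.
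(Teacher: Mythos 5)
Your proposal is correct, but it reaches the key estimate by a genuinely different route from the paper. Both proofs share the same skeleton: check $R^\text{min}_\alpha C_0(\bN)\subset C_0(\bN)$ via \eqref{eq:24} and \eqref{eq:23}, reduce the strong continuity $\lim_{\alpha\to\infty}\|\alpha R^\text{min}_\alpha f-f\|_\infty=0$ to $f\in C_c(\bN)$ by density and contraction, and finish with Hille--Yosida. The difference is in how the uniform bound on $g_\alpha:=\alpha R^\text{min}_\alpha f-f$ is obtained for $f\in C_c(\bN)$. The paper works with the Dirichlet form $(\sA^\text{min},\sG^\text{min})$ on $L^2(\bN,\mu)$: it invokes \cite[Lemma~1.3.3]{FOT11} to get $\sA^\text{min}(g_\alpha,g_\alpha)\to0$ and then converts energy decay into sup-norm decay by Cauchy--Schwarz, $|g_\alpha(k_0)|\le(2\sA^\text{min}(g_\alpha,g_\alpha))^{1/2}c_\infty^{1/2}$, which uses $c_\infty<\infty$ a second time. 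You instead use the forward Kolmogorov equation for the minimal resolvent to write $\alpha R^\text{min}_\alpha f-f=R^\text{min}_\alpha(Qf)$ and bound this by $\alpha^{-1}\|Qf\|_\infty$; this is more elementary and gives an explicit $O(1/\alpha)$ rate, at the cost of having to justify the forward equation (your detailed-balance derivation $\mu_k q_{kj}=\mu_j q_{jk}$ from the explicit form of $\mu$ is valid, and the interchange of sums is indeed harmless since $Q$ is tridiagonal and $f$ has finite support; note also that the backward form $QR_\alpha f=\alpha R_\alpha f-f$, which the paper uses elsewhere, would \emph{not} suffice here --- your remark about putting $Q$ on $f$ rather than on the resolvent image is exactly the right observation). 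The paper's energy argument is more tightly wedded to the symmetric structure and to $c_\infty<\infty$, whereas your argument isolates $c_\infty<\infty$ as being needed only for the $C_0$-preservation step through \eqref{eq:23}. Your final identification of the Hille--Yosida semigroup with $(P^\text{min}_t)$ via uniqueness of Laplace transforms of continuous bounded functions matches the paper's concluding step in spirit and is sound.
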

 \begin{proof}
Let $(\sA^\text{min},\sG^\text{min})$ be the Dirichlet form associated with $X^\text{min}$ on $L^2(\bN,\mu)$,  as described in \cite[Lemma~3.1]{L23}. Fix $f\in C_c(\bN)\subset \sG^\text{min}$.  According to \cite[Lemma~3.1]{L23},  we have $R^\text{min}_\alpha f \in \sG^\text{min}\subset  C_0(\bN)$ for any $\alpha>0$.  By utilizing \cite[Lemma~1.3.3]{FOT11},  we can derive that 
\[
	\sA^\text{min}(g_\alpha,g_\alpha):=\frac{1}{2}\sum_{k\in \bN}\frac{\left(g_\alpha(k+1)-g_\alpha(k)\right)^2}{c_{k+1}-c_k}\rightarrow 0
\]
as $\alpha\rightarrow \infty$,  where $g_\alpha:=\alpha R^\text{min}_\alpha f-f\in C_0(\bN)$.  It follows from the Cauchy-Schwarz inequality that for any $k_0\in \bN$,
\[
|g_\alpha(k_0)|=\left|\sum_{k\geq k_0}(g_{\alpha}(k+1)-g_\alpha(k)) \right| \leq \left(2\sA^\text{min}(g_\alpha,g_\alpha)\right)^{1/2}\cdot c_\infty^{1/2}.
\]
Consequently,  $\lim_{\alpha\rightarrow \infty}\|\alpha R^\text{min}_\alpha f-f\|_\infty=\lim_{\alpha\rightarrow \infty}\|g_\alpha\|_\infty=0$.  In other words,  we have established that $R^\text{min}_\alpha C_c(\bN)\subset C_0(\bN)$ and $\lim_{\alpha\rightarrow \infty}\|\alpha R^\text{min}_\alpha f-f\|_\infty=0$ for all $f\in C_c(\bN)$.  Note that $C_c(\bN)$ is dense in $C_0(\bN)$ with respect to the uniform norm $\|\cdot\|_\infty$,  and $\|\alpha R^\text{min}_\alpha f\|_\infty \leq \|f\|_\infty$ holds for all $f\in C_c(\bN)$.  Therefore,  it is straightforward to verify that $R_\alpha^\text{min}C_0(\bN)\subset C_0(\bN)$ and $\lim_{\alpha\rightarrow \infty}\|\alpha R^\text{min}_\alpha f-f\|_\infty=0$ for all $f\in C_0(\bN)$.   Applying the Hille-Yosida theorem,  we can obtain \eqref{eq:32}.  
 \end{proof}

Based on the density matrix $Q$ provided by \eqref{eq:11},  we define a function $QF$ on $\bN$ for every function $F$ on $\bN$ as
\begin{equation}\label{eq:31}
	QF(k):=a_kF(k-1)-q_kF(k)+b_kF(k+1),\quad k\in \bN,
\end{equation}
where $F(-1):=0$.  Note that $F$ can be extended to a function in $C(\overline{\bN})$,  if and only if $F(\infty):=\lim_{n\rightarrow \infty}F(n)$ exists.  In this case, we represent its extension using the same symbol $F$.  Then,  by an abuse of notation, the operator $Q$ defined by \eqref{eq:31} with domain 
\[
	\cD(Q):=\{F\in C(\overline{\bN}): QF\in C(\overline{\bN})\}
\]
is usually referred to as the \emph{maximal (discrete) generalized second order differential operator};  see,  e.g.,  \cite{F59} and \cite[Lemma~6.1]{L23}. 

Given the triple $(\gamma, 0,\nu)\in \mathscr Q_D$,  we define
\begin{equation}\label{eq:611-3}
	\bC_{\nu,\gamma}:=\left\{F\in C(\overline \bN): \sum_{k\in \bN}(F(\infty)-F(k))\nu_k +\gamma F(\infty)=0\right\}.
\end{equation}
Since $|\nu|<\infty$,  $\bC_{\nu,\gamma}$ is a closed subspace of $C(\overline \bN)$.  Thus,  it is a Banach space equipped with the uniform norm $\|\cdot\|_\infty$.  

The following result is analogous to the second part of \cite[II \S5, Theorem~3]{M68}. 

\begin{theorem}\label{THM64}
Let $X$ be a Doob process determined by the triple $(\nu, 0,\gamma)\in \mathscr Q_D$,  i.e.,  $0<|\nu|<\infty$.   
Then, $(P_t)_{t\geq 0}$ acts on $\bC_{\nu,\gamma}$ as a strongly continuous contractive  semigroup,  i.e., 
\[
	P_t f\in \bC_{\nu,\gamma},\quad \lim_{t\rightarrow 0}\|P_tf-f\|_\infty=0
\]
for all $f\in \bC_{\nu,\gamma}$.  
Furthermore,  the infinitesimal generator of $(P_t)_{t\geq 0}$ on $\bC_{\nu,\gamma}$ is $\sL_{\nu,\gamma}F:=QF$ with domain $\cD(\sL_{\nu,\gamma}):=\{F\in \bC_{\nu,\gamma}: QF\in \bC_{\nu,\gamma}\}$.  
\end{theorem}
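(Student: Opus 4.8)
The plan is to verify the hypotheses of the Hille--Yosida theorem for the resolvent $(R_\alpha)_{\alpha>0}$ acting on the Banach space $\bC_{\nu,\gamma}$, then to identify the generated semigroup with $(P_t)_{t\geq 0}$ and its generator with $\sL_{\nu,\gamma}$. First I would record two algebraic facts that follow directly from the resolvent representation \eqref{eq:215}--\eqref{eq:214} with $\beta=0$. Namely, for every $f\in C(\overline{\bN})$ one has $R_\alpha f\in \bC_{\nu,\gamma}$: substituting $R_\alpha f(k)=R^\text{min}_\alpha f(k)+u^\text{min}_\alpha(k)R_\alpha f(\infty)$ into the boundary functional defining \eqref{eq:611-3} and inserting the explicit value of $R_\alpha f(\infty)$ makes the expression telescope to $0$. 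In particular $R_\alpha \bC_{\nu,\gamma}\subset \bC_{\nu,\gamma}$, while $\alpha R_\alpha$ is a contraction there since $\alpha R_\alpha 1_{\overline{\bN}}\leq 1$; the resolvent equation is inherited from the fact that $(R_\alpha)$ is already a genuine resolvent.

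Second --- and this is the crux --- I would prove the strong continuity $\lim_{\alpha\to\infty}\|\alpha R_\alpha f-f\|_\infty=0$ for $f\in \bC_{\nu,\gamma}$. Writing $f=f_0+f(\infty)1_{\overline{\bN}}$ with $f_0\in C_0(\bN)$ and using $R^\text{min}_\alpha 1_{\overline{\bN}}=\alpha^{-1}(1-u^\text{min}_\alpha)$ from \eqref{eq:24}, a short computation reduces the error to
\[
\alpha R_\alpha f(k)-f(k)=\bigl(\alpha R^\text{min}_\alpha f_0(k)-f_0(k)\bigr)+u^\text{min}_\alpha(k)\bigl(\alpha R_\alpha f(\infty)-f(\infty)\bigr).
\]
The first bracket tends to $0$ uniformly by Lemma~\ref{LM31}. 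For the second, since $u^\text{min}_\alpha(k)\to 0$ as $\alpha\to\infty$ and $|\nu|<\infty$, dominated convergence gives $\sum_k\nu_k(1-u^\text{min}_\alpha(k))\to|\nu|$ and $\sum_k\nu_k\,\alpha R^\text{min}_\alpha f(k)\to\nu(f)$, whence $\alpha R_\alpha f(\infty)\to \nu(f)/(\gamma+|\nu|)$; the defining relation of $\bC_{\nu,\gamma}$ is exactly $\nu(f)=(\gamma+|\nu|)f(\infty)$, so this limit equals $f(\infty)$. As $0\le u^\text{min}_\alpha\le 1$, the second term is dominated by $|\alpha R_\alpha f(\infty)-f(\infty)|\to0$, and strong continuity follows. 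By Hille--Yosida there is then a unique strongly continuous contraction semigroup $(T_t)$ on $\bC_{\nu,\gamma}$ with resolvent $(R_\alpha)$; comparing Laplace transforms pointwise in $x$ and using the right-continuity of $t\mapsto P_tf(x)$ against the continuity of $t\mapsto T_tf(x)$ identifies $T_t=P_t$ on $\bC_{\nu,\gamma}$, which yields both $P_t\bC_{\nu,\gamma}\subset\bC_{\nu,\gamma}$ and the asserted strong continuity of $(P_t)$.

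Finally I would identify the generator. From the minimal process one has $QR^\text{min}_\alpha f=\alpha R^\text{min}_\alpha f-f$ on $\bN$, and since $Q1=0$ together with $u^\text{min}_\alpha=1-\alpha R^\text{min}_\alpha 1$ one gets $Qu^\text{min}_\alpha=\alpha u^\text{min}_\alpha$; applying $Q$ to $R_\alpha f=R^\text{min}_\alpha f+u^\text{min}_\alpha R_\alpha f(\infty)$ (with $R_\alpha f(\infty)$ constant) gives $QR_\alpha f=\alpha R_\alpha f-f$. Hence the generator $\sL$ of $(P_t)$ satisfies $\sL=Q$ on its domain $\cD(\sL)=R_\alpha\bC_{\nu,\gamma}$, and the inclusion $\cD(\sL)\subset\cD(\sL_{\nu,\gamma})$ is immediate. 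For the reverse, given $F\in\bC_{\nu,\gamma}$ with $QF\in\bC_{\nu,\gamma}$, set $h:=R_\alpha(\alpha F-QF)-F$; then $Qh=\alpha h$ on $\bN$ and $h\in\bC_{\nu,\gamma}$. The recurrence $Qh=\alpha h$ (with the convention $h(-1)=0$) has a one-dimensional solution space, so $h$ is a multiple of $u^\text{min}_\alpha$; but the boundary functional of \eqref{eq:611-3} evaluated at $u^\text{min}_\alpha$ equals $\gamma+\sum_k\nu_k(1-u^\text{min}_\alpha(k))>0$, so $u^\text{min}_\alpha\notin\bC_{\nu,\gamma}$ and the only such $h$ lying in $\bC_{\nu,\gamma}$ is $h=0$. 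Thus $F=R_\alpha(\alpha F-QF)\in\cD(\sL)$, completing the identification $\sL=\sL_{\nu,\gamma}$.

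I expect the strong-continuity step to be the main obstacle: it is precisely the place where the restriction to $\bC_{\nu,\gamma}$ is indispensable (the text notes that strong continuity fails on all of $C(\overline{\bN})$), and it succeeds only because the boundary limit $\nu(f)/(\gamma+|\nu|)$ is forced to equal $f(\infty)$ by the defining boundary condition. The second delicate point is the uniqueness argument closing the domain identification, whose key input is that $u^\text{min}_\alpha$ violates that same boundary functional.
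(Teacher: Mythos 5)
Your proposal is correct and follows essentially the same route as the paper's proof: verify the Hille--Yosida hypotheses for $(R_\alpha)_{\alpha>0}$ on $\bC_{\nu,\gamma}$ (invariance of $\bC_{\nu,\gamma}$ under $R_\alpha$, contractivity, and strong continuity via the decomposition $\alpha R_\alpha f-f=(\alpha R^\text{min}_\alpha f_0-f_0)+u^\text{min}_\alpha\,(\alpha R_\alpha f(\infty)-f(\infty))$, where the boundary condition forces $\alpha R_\alpha f(\infty)\to \nu(f)/(\gamma+|\nu|)=f(\infty)$), then identify the semigroup by comparing Laplace transforms with right-continuity in $t$, and identify the generator via $QR_\alpha f=\alpha R_\alpha f-f$. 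The only notable difference is that you make the uniqueness step explicit (every solution of $Qh=\alpha h$ is a multiple of $u^\text{min}_\alpha$, which violates the boundary functional since $\gamma+\nu(1-u^\text{min}_\alpha)>0$), whereas the paper defers this to the corresponding argument in \cite{L23}.
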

\begin{proof}
We aim to apply the Hille-Yosida theorem (see, e.g., \cite[I \S1, Theorem~1]{M68}) to $(R_\alpha)_{\alpha>0}$ and $\sL_{\nu,\gamma}$.  Two facts need to be proven:
\begin{itemize}
\item[(1)] $\cD(\sL_{\nu,\gamma})$ is dense in $\bC_{\nu,\gamma}$.
\item[(2)] For any $f \in \bC_{\nu,\gamma}$ and $\alpha > 0$, $R_\alpha f$ is the unique solution to the equation
\begin{equation}\label{eq:617}
\alpha F - \sL_{\nu,\gamma} F = f.
\end{equation}
\end{itemize}

Firstly,  we prove that $R_\alpha f\in \D(\sL_{\nu,\gamma})$ for any $f\in \bC_{\nu,\gamma}$ and $\alpha>0$.   According  to \eqref{eq:214},  we have $R_\alpha f\in C(\overline{\bN})$ with
\begin{equation}\label{eq:618}
\begin{aligned}
	R_\alpha f(k)&=R^\text{min}_\alpha f(k)+u^\text{min}_\alpha(k) R_\alpha f(\infty)\\
	&=R^\text{min}_\alpha f(k)+u^\text{min}_\alpha(k)\frac{\sum_{i\in \bN}\nu_i R^\text{min}_\alpha f(i)}{\gamma+\sum_{i\in \bN}\nu_i(1-u^\text{min}_\alpha(i))}.
\end{aligned}\end{equation}
Then, a straightforward computation yields
\[
	\sum_{k\in \bN}\nu_k R_\alpha f(k)=(\gamma+|\nu|)R_\alpha f(\infty).
\]
Hence, $R_\alpha f\in \bC_{\nu,\gamma}$.  In addition,  it follows from \cite[Theorems~7.1 and 9.1]{F59} that for $k\in \bN$, 
\begin{equation}\label{eq:620}
\begin{aligned}
	QR_\alpha f(k)&=QR^\text{min}_\alpha f(k)+R_\alpha f(\infty)\cdot Qu^\text{min}_\alpha (k)  \\
	&=(\alpha R^\text{min}_\alpha f(k)-f(k))+R_\alpha f(\infty) \cdot \alpha u^\text{min}_\alpha (k).  
\end{aligned}\end{equation}
Thus, $QR_\alpha f\in C(\overline{\bN})$ with 
\begin{equation}\label{eq:619}
QR_\alpha f(\infty)=\alpha R_\alpha f(\infty)-f(\infty).  
\end{equation}
Note that $\sum_{k\in \bN}\nu_k f(k)=(\gamma+|\nu|)f(\infty)$ due to $f\in \bC_{\nu,\gamma}$.  This,  together with \eqref{eq:618} and \eqref{eq:619},  yields that 
\[
	\sum_{k\in \bN}\nu_k QR_\alpha f(k)=(\gamma+|\nu|)QR_\alpha f(\infty).
\]
As a result, $QR_\alpha f\in \bC_{\nu,\gamma}$.  Therefore,  $R_\alpha f\in \cD(\sL_{\nu,\gamma})$ is obtained.   

Next,  we consider the resolvent equation \eqref{eq:617} and prove that $R_\alpha f$ is the unique solution to \eqref{eq:617}.  In fact,  by using $R_\alpha f\in \cD(\sL_{\nu,\gamma})$, \eqref{eq:618},  and \eqref{eq:620},  we have
\[
	\left((\alpha-\sL_{\nu,\gamma})R_\alpha f\right)(k)=\alpha R_\alpha f(k)-Q R_\alpha f(k)=f(k),\quad \forall k\in \bN.
	\]
Consequently, $R_\alpha f$ is a solution to \eqref{eq:617}.  
The uniqueness of the solutions to \eqref{eq:617} can be concluded by the same argument in the last paragraph of the proof of \cite[Theorem~6.3]{L23}.  

Thirdly,  we demonstrate that 
\begin{equation}\label{eq:38}
	\lim_{\alpha\rightarrow \infty}\|\alpha R_\alpha f-f\|_\infty=0
\end{equation}
for all $f\in \bC_{\nu,\gamma}$,  thereby establishing that $\cD(\sL_{\nu,\gamma})$ is dense in $\bC_{\nu,\gamma}$.  To prove \eqref{eq:38},  we define $f_0:=f-f(\infty)\cdot 1_{\overline{\bN}}\in C_0(\bN)$.  Utilizing \eqref{eq:24} and \eqref{eq:618},  we can deduce that
\begin{equation}\label{eq:39}
\begin{aligned}
	\alpha R_\alpha f(k)-f(k)&=\alpha R^\text{min}_\alpha f_0(k)-f_0(k)-f(\infty)u_\alpha^\text{min}(k)\\
	&\qquad  +u^\text{min}_\alpha(k)\frac{\nu\left(\alpha R^\text{min}_\alpha f_0\right)+f(\infty)\sum_{i\in \bN}\nu_i(1-u^\text{min}_\alpha(i))}{\gamma+\sum_{i\in \bN}\nu_i(1-u^\text{min}_\alpha(i))}.  
\end{aligned}\end{equation}
As established in Lemma~\ref{LM31},  $\lim_{\alpha \rightarrow \infty}\|\alpha R_\alpha^\text{min}f_0-f_0\|_\infty=0$.  Since $|\nu|<\infty$ and $\lim_{\alpha\rightarrow \infty}u^\text{min}_\alpha(i)=0$,  it follows that
\[
\lim_{\alpha\rightarrow \infty}\frac{\nu\left(\alpha R^\text{min}_\alpha f_0\right)+f(\infty)\sum_{i\in \bN}\nu_i(1-u^\text{min}_\alpha(i))}{\gamma+\sum_{i\in \bN}\nu_i(1-u^\text{min}_\alpha(i))}=\frac{\nu(f_0)+f(\infty)|\nu|}{\gamma+|\nu|}.
\]
Note that $(\gamma+|\nu|)f(\infty)=\nu(f)=\nu(f_0)+f(\infty)|\nu|$,  since $f\in \bC_{\nu,\gamma}$.  Thus,  the above limit is equal to $f(\infty)$.  From \eqref{eq:39},  we can obtain \eqref{eq:38}.  

Finally,  by applying the Hille-Yosida theorem,  we can conclude that $\sL_{\nu,\gamma}$ is the infinitesimal generator of the resolvent $(R_\alpha)_{\alpha>0}$ on $\bC_{\nu,\gamma}$.  Hence,  it admits a strongly continuous contractive semigroup $(P'_t)_{t\geq 0}$ on $\bC_{\nu, \gamma}$.  Fixing $f\in \bC_{\nu,\gamma}$ and $x\in \overline\bN$,  we have
\[
	R_\alpha f(x)=\int_0^\infty e^{-\alpha t}P_tf(x)dt=\int_0^\infty e^{-\alpha t}P'_tf(x)dt,\quad \forall \alpha>0.
\]
Thus,  $P_tf(x)=P'_t f(x)$ for a.e.  $t$.  Since $\|P'_{t+h}f-P'_t f\|_\infty\rightarrow 0$ as $h\downarrow 0$ for any $t\geq 0$ and $t\mapsto P_t f(x)$ is right-continuous (see \cite[Theorem~8.2]{CW05}),  it is easy to see that $P_tf(x)=P'_tf(x)$ for all $t$.  This completes the proof. 
\end{proof}

\begin{remark}
In the reduced case where $\gamma>0$ and $\beta=|\nu|=0$, corresponding to the minimal $Q$-process,  we have $\bC_{\nu,\gamma}=C_0(\bN)$.  The same argument as presented in this proof indicates that the infinitesimal generator of $(P^\text{min}_t)_{t\geq 0}$ on $C_0(\bN)$ is given by $\sL^\text{min}F=QF$ with domain $\cD(\sL^\text{min})=\{F\in C_0(\bN):QF\in C_0(\bN)\}$.  
\end{remark}

\section{Weak convergence on Skorohod topological space}\label{SEC4}

In this section, we continue our study of the convergence issues that were temporarily paused in \S\ref{SEC31}.  As explained in \S\ref{SEC26},  for each probability measure $\lambda^{(n)}$ (resp.  $\lambda$) on $\overline{\bN}_\partial$,  $\bP^{(n)}_{\lambda^{(n)}}$ (resp.  $\bP_\lambda$) can be considered as a probability measure on the Skorohod topological space $D_{\overline{\bN}_\partial}[0,\infty)$.  Our goal is to demonstrate that if $(\gamma^{(n)},\beta^{(n)},\nu^{(n)})$ converges to $(\gamma,\beta,\nu)\in \mathscr Q_F$ and $\lambda^{(n)}$ converges to $\lambda$ in some sense,  then $\bP^{(n)}_{\lambda^{(n)}}$ converges  weakly  to $\bP_\lambda$ on $D_{\overline{\bN}_\partial}[0,\infty)$.  

It should be noted that the assumption of the target process being a Feller $Q$-process, i.e., $(\gamma, \beta, \nu)\in \mathscr Q_F$, appears to be necessary. According to Theorem \ref{THM64}, when $X$ is a Doob process, its semigroup exhibits improved analytic properties when restricted to the proper subspace $\mathbf{C}_{\nu,\gamma}$ of $C(\overline{\bN})$.  Thus,  establishing convergence properties with respect to all functions in $C(\overline{\bN})$ seems challenging.  However, it is worth mentioning that Doob processes and Feller $Q$-processes with $|\nu|<\infty$ are well understood (see \cite{L23}). Therefore, our main focus is on examining the approximation of Feller $Q$-processes with $|\nu|=\infty$. From this perspective, the assumption of $(\gamma,\beta,\nu)\in \mathscr Q_F$ does not result in any loss.

\subsection{Approximating triples}

We begin by examining condition (1c) in Theorem~\ref{THM42}.  According to \eqref{eq:214},  this condition can be expressed in terms of the triples in $\mathscr Q$ as
\begin{equation}\label{eq:51}
\begin{aligned}
	\lim_{n\rightarrow \infty}&\frac{\nu^{(n)}\left(R^\text{min}_\alpha f\right)+\beta^{(n)}\cdot \mu\left(u^\text{min}_\alpha f\right)}{\gamma^{(n)}+\nu^{(n)}\left(1-u^\text{min}_\alpha\right)+\beta^{(n)}\alpha\cdot \mu\left( u^\text{min}_\alpha\right)} =\frac{\nu\left(R^\text{min}_\alpha f\right)+\beta\cdot \mu\left(u^\text{min}_\alpha f\right)}{\gamma+\nu\left(1-u^\text{min}_\alpha\right)+\beta\alpha \cdot \mu\left( u^\text{min}_\alpha\right)}.
\end{aligned}\end{equation}
The verification of the convergence of the corresponding parts in \eqref{eq:51} is a straightforward process in establishing the validity of (1c). Specifically, we can examine the convergence properties of the triples $(\gamma^{(n)}, \beta^{(n)}, \nu^{(n)})$ as follows.

\begin{definition}\label{DEF51}
The triple $(\gamma^{(n)}, \beta^{(n)}, \nu^{(n)})\in \mathscr Q$ is said to converge to $(\gamma,\beta,\nu)\in \mathscr Q$,  if
\[
	\lim_{n\rightarrow \infty}\gamma^{(n)}=\gamma,\quad \lim_{n\rightarrow \infty}\beta^{(n)}=\beta,\quad \lim_{n\rightarrow \infty}\nu^{(n)}_k=\nu_k,\; \forall k\in \bN,
\]
and 
\begin{equation}\label{eq:52}
	\lim_{n\rightarrow \infty} \nu^{(n)}\left(1-u^\text{min}_\alpha\right)=\nu\left(1-u^\text{min}_\alpha\right),\quad \forall \alpha>0\; (\text{or equivalently, }\exists \alpha>0).  
\end{equation}
\end{definition}
\begin{remark}
The pointwise convergence $\lim_{n\rightarrow \infty}\nu^{(n)}_k=\nu_k, \forall k\in \bN,$ is equivalent to the \emph{vague convergence} of the measure $\nu^{(n)}$ on $\mathbb{N}$ to $\nu$,  i.e., $\nu^{(n)}(f)\rightarrow \nu(f)$ for any $f\in C_c(\mathbb{N})$. However,  this condition alone is not sufficient to guarantee \eqref{eq:52},  because $1-u_\alpha^\text{min}\in C_0(\mathbb{N})$ and $\nu$ or $\nu^{(n)}$ may be infinite measure.  The equivalence between the two formulations in condition \eqref{eq:52} can be established by using the following two inequalities and the generalized dominated convergence theorem (similar to the approach in the proof of Lemma~\ref{LM53}): $1-u_{\alpha_1}^\text{min}(k)\leq 1-u_{\alpha_2}^\text{min}(k)$ and $1-u_{2\alpha_1}^\text{min}(k)\leq 2(1-u_{\alpha_1}^\text{min}(k))$ for all $\alpha_2> \alpha_1>0$.
\end{remark}

When $\nu^{(n)}$ monotonically converges pointwise to $\nu$, i.e., $\nu^{(n)}_k\uparrow \nu_k$ (or $\nu^{(n)}_k\downarrow \nu_k$) for all $k\in \mathbb{N}$,  condition \eqref{eq:52} is clearly satisfied by the dominated convergence theorem.  This case includes approximation sequence obtained using the simplest truncation method:
\[
\gamma^{(n)}:=\gamma,\quad \beta^{(n)}:=\beta,\quad \nu^{(n)}_k:=\nu_k,\;0\leq k\leq n,\quad \nu^{(n)}_k:=0,\;k>n,
\]	
which serves as the primary motivation for the study of this section.  However, Definition~\ref{DEF51} is not restricted to the scenario described in this example, where a sequence of simple processes approximates a complex process. It also allows for the opposite situation. For instance, let us consider the case where $\infty$ is a regular boundary and $(\gamma, \beta, \nu)\in \mathscr Q$ satisfies $\beta>0$ and $|\nu|=\infty$.  Define
\[
	\gamma^{(n)}:=\gamma, \quad \beta^{(n)}:=\beta, \quad \nu^{(n)}_k:=0,\;0\leq k\leq n,\quad \nu^{(n)}_k:=\nu_k,\; k>n.
\]
Then,  $|\nu^{(n)}|=\infty$ for all $n\in \bN$,  while $\left(\gamma^{(n)},\beta^{(n)},\nu^{(n)}\right)$ converges to $(\gamma, \beta,0)$ in the sense of Definition~\ref{DEF51}.  

It is easy to prove that the convergence based on the triple, as defined above, implies \eqref{eq:51}.

\begin{lemma}\label{LM53}
Assume that $(\gamma^{(n)}, \beta^{(n)}, \nu^{(n)})$ converges to $(\gamma,\beta,\nu)$ in the sense of Definition~\ref{DEF51}.  Then all the conditions in Theorem~\ref{THM42} are satisfied.
\end{lemma}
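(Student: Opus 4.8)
The plan is to invoke Theorem~\ref{THM42}: since all eight families of conditions listed there are mutually equivalent, it suffices to verify any one of them, and the most convenient is condition (1c), which by \eqref{eq:214} is precisely the limit identity \eqref{eq:51}. I would establish \eqref{eq:51} by showing separately that the numerator and the denominator of the fraction on the left converge to those on the right, after first noting that the limiting denominator is strictly positive. The denominator is the easy part: $\gamma^{(n)}\to\gamma$ and $\beta^{(n)}\to\beta$ hold by hypothesis, $\nu^{(n)}(1-u^\text{min}_\alpha)\to\nu(1-u^\text{min}_\alpha)$ is exactly \eqref{eq:52}, and since $\mu(u^\text{min}_\alpha)<\infty$ in the regular case the term $\beta^{(n)}\alpha\,\mu(u^\text{min}_\alpha)$ converges as well; in the exit case \eqref{eq:B4} forces $\beta=\beta^{(n)}=0$, so that term simply drops out. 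Positivity of the limiting denominator follows from $(\gamma,\beta,\nu)\in\mathscr Q$: if $\beta=0$ then $|\nu|\neq0$, so $\nu(1-u^\text{min}_\alpha)>0$, while if $\beta>0$ then $\beta\alpha\,\mu(u^\text{min}_\alpha)>0$.

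The only genuinely delicate point is the convergence $\nu^{(n)}(R^\text{min}_\alpha f)\to\nu(R^\text{min}_\alpha f)$ appearing in the numerator. Here $g:=R^\text{min}_\alpha f$ lies in $C_0(\bN)$ but need not have compact support, and $\nu^{(n)},\nu$ may be infinite measures, so the pointwise (equivalently vague) convergence $\nu^{(n)}_k\to\nu_k$ alone does not suffice. The estimate I would rely on is the domination $|R^\text{min}_\alpha f(k)|\le\|f\|_\infty\sum_{j\in\bN}\Phi^\text{min}_{kj}(\alpha)=\tfrac{\|f\|_\infty}{\alpha}\bigl(1-u^\text{min}_\alpha(k)\bigr)$, which comes straight from \eqref{eq:24}. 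This bounds $g$ by a constant multiple of the function $h:=1-u^\text{min}_\alpha$, whose masses converge under $\nu^{(n)}$ by \eqref{eq:52}.

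With this in hand I would run a truncation ($\varepsilon/3$) argument playing the role of a generalized dominated convergence theorem for the varying measures $\nu^{(n)}$. Splitting $\nu^{(n)}(g)=\sum_{k\le N}\nu^{(n)}_k g(k)+\sum_{k>N}\nu^{(n)}_k g(k)$, the head is a finite sum converging to $\sum_{k\le N}\nu_k g(k)$ by the pointwise convergence of masses, while the tail is controlled by $\tfrac{\|f\|_\infty}{\alpha}\bigl(\nu^{(n)}(h)-\sum_{k\le N}\nu^{(n)}_k h(k)\bigr)$, which by \eqref{eq:52} and pointwise convergence tends, as $n\to\infty$, to $\tfrac{\|f\|_\infty}{\alpha}\sum_{k>N}\nu_k h(k)$; the latter vanishes as $N\to\infty$ because $\nu(h)<\infty$. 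Letting first $n\to\infty$ and then $N\to\infty$ gives $\nu^{(n)}(R^\text{min}_\alpha f)\to\nu(R^\text{min}_\alpha f)$. The remaining numerator term $\beta^{(n)}\mu(u^\text{min}_\alpha f)$ converges because $\beta^{(n)}\to\beta$ and $\mu(u^\text{min}_\alpha|f|)\le\|f\|_\infty\mu(u^\text{min}_\alpha)<\infty$ in the regular case (and is absent in the exit case). Assembling these convergences yields \eqref{eq:51}, hence condition (1c), and therefore all conditions of Theorem~\ref{THM42}. As indicated, the main obstacle is exactly this tail control for the infinite measures $\nu^{(n)}$; once the domination by $h=1-u^\text{min}_\alpha$ is combined with the mass convergence \eqref{eq:52}, everything else is routine.
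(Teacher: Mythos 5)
Your proof is correct and follows essentially the same route as the paper: reduce to condition (1c) via \eqref{eq:51}, observe that the only nontrivial point is $\nu^{(n)}(R^\text{min}_\alpha f)\to\nu(R^\text{min}_\alpha f)$, and handle it through the domination $|R^\text{min}_\alpha f(k)|\le\tfrac{\|f\|_\infty}{\alpha}(1-u^\text{min}_\alpha(k))$ combined with \eqref{eq:52}. The only difference is that you write out the generalized dominated convergence theorem explicitly as a truncation argument, whereas the paper simply cites it.
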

\begin{proof}
It suffices to prove $\nu^{(n)}(R_\alpha^\text{min} f)\rightarrow \nu(R_\alpha^\text{min}f)$ for all $f\in C(\overline{\bN})$.  To do this,  note that
\[
\nu^{(n)}_k  R_\alpha^\text{min} f(k)\rightarrow \nu_k  R_\alpha^\text{min} f(k),\quad \nu^{(n)}_k(1-u^\text{min}_\alpha(k))\rightarrow \nu_k(1-u^\text{min}_\alpha(k))
\]
and 
\[
	|\nu^{(n)}_k R^\text{min}_\alpha f(k)|\leq \frac{\|f\|_\infty}{\alpha}\nu^{(n)}_k(1-u_\alpha^\text{min}(k))
\]
for fixed $k\in \bN$.  Thus,  $\nu^{(n)}(R_\alpha^\text{min} f)\rightarrow \nu(R_\alpha^\text{min}f)$ follows from \eqref{eq:52} by utilizing the generalized dominated convergence theorem (see \cite[\S2.3, Exercise 20]{F99}).
\end{proof}

\subsection{Convergence in finite-dimensional distributions}

For each $n$,  define $\bC^{(n)}:=C(\overline{\bN})$ if $X^{(n)}$ is a Feller $Q$-process,  and define $\bC^{(n)}:=\bC_{\nu^{(n)},\gamma^{(n)}}$ as \eqref{eq:611-3} (with $\nu=\nu^{(n)},\gamma=\gamma^{(n)}$) if $X^{(n)}$ is a Doob process.  According to \cite[Theorem~6.3]{L23} or Theorem~\ref{THM64},  the transition semigroup $(P^{(n)}_t)_{t\geq 0}$ acts on $\bC^{(n)}$ as a strongly continuous contractive semigroup.  Let $\sL^{(n)}$ with domain $\cD(\sL^{(n)})$ denote the infinitesimal generator of $(P^{(n)}_t)_{t\geq 0}$ on $\bC^{(n)}$.  Similarly,  we define $\bC:=C(\overline{\bN})$ if $X$ is a Feller $Q$-process,  and $\bC:=\bC_{\nu,\gamma}$ if $X$ is a Doob process.  Denote by $\sL$ with domain $\cD(\sL)$ the infinitesimal generator of $(P_t)_{t\geq 0}$ on $\bC$.  

\begin{lemma}\label{LM54}
Assume that $(\gamma^{(n)}, \beta^{(n)}, \nu^{(n)})$ converges to $(\gamma,\beta,\nu)$ in the sense of Definition~\ref{DEF51}.  Then the following conclusions hold:
\begin{itemize}
\item[(1)] For any $g\in \bC$,  there exists a sequence $g_n\in \bC^{(n)}$ such that 
\begin{equation}\label{eq:98}
\|g_n-g\|_\infty=0,\quad n\rightarrow \infty.
\end{equation}
\item[(2)] For any $f\in \cD(\sL)$,  there exists a sequence $f_n\in \cD(\sL^{(n)})$ such that 
\begin{equation}\label{eq:914}
	\|f_n-f\|_\infty+\|\sL^{(n)} f_n-\sL f\|_\infty\rightarrow 0,\quad n\rightarrow \infty.  
\end{equation}
\end{itemize}
\end{lemma}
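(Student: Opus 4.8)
The plan is to reduce part (2) to part (1) via the resolvent representation of the domains, and then to prove part (1) by an explicit tail-modification of $g$ near $\infty$. Throughout I use that, since the target lies in $\mathscr Q_F$, we have $\bC=C(\overline{\bN})$, while $\bC^{(n)}$ is either $C(\overline{\bN})$ (Feller $X^{(n)}$) or the hyperplane $\bC_{\nu^{(n)},\gamma^{(n)}}$ cut out by \eqref{eq:611-3} (Doob $X^{(n)}$).

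For the reduction, fix $\alpha>0$ and recall from Theorem~\ref{THM64} (and \cite[Theorem~6.3]{L23} in the Feller case) that $R_\alpha$ maps $\bC$ bijectively onto $\cD(\sL)$ with $\sL R_\alpha h=\alpha R_\alpha h-h$, and likewise for $\sL^{(n)},R^{(n)}_\alpha$ on $\bC^{(n)}$. Given $f\in\cD(\sL)$, I would write $f=R_\alpha h$ with $h:=\alpha f-\sL f\in\bC$. Using part~(1), pick $h_n\in\bC^{(n)}$ with $\|h_n-h\|_\infty\to0$ and set $f_n:=R^{(n)}_\alpha h_n\in\cD(\sL^{(n)})$. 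Then $\sL^{(n)}f_n=\alpha f_n-h_n$, so it suffices to show $f_n\to f$ uniformly; indeed
\[
\|f_n-f\|_\infty\le\tfrac1\alpha\|h_n-h\|_\infty+\|(R^{(n)}_\alpha-R_\alpha)h\|_\infty,
\]
where the first term vanishes by the contractivity $\|R^{(n)}_\alpha\|\le\alpha^{-1}$ and the second by the uniform resolvent convergence (1b) of Theorem~\ref{THM42}, which is available through Lemma~\ref{LM53}. Consequently $\sL^{(n)}f_n=\alpha f_n-h_n\to\alpha f-h=\sL f$, giving \eqref{eq:914}. Thus the whole lemma rests on part~(1).

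For part (1), if $X^{(n)}$ is a Feller $Q$-process then $\bC^{(n)}=C(\overline{\bN})=\bC$ and I simply take $g_n:=g$. The substantive case is $X^{(n)}$ a Doob process. Here I would modify $g$ only on a tail: fix integers $N_n\uparrow\infty$ and set $g_n(k):=g(k)$ for $k\le N_n$, $g_n(k):=v_n$ for $k>N_n$, and $g_n(\infty):=v_n$, where the constant $v_n$ is forced by the boundary condition \eqref{eq:611-3} to be
\[
v_n:=\frac{\sum_{k\le N_n}\nu^{(n)}_k g(k)}{\gamma^{(n)}+\sum_{k\le N_n}\nu^{(n)}_k}.
\]
By construction $g_n$ is eventually constant, hence lies in $C(\overline{\bN})$ and satisfies the $n$-th boundary condition, so $g_n\in\bC^{(n)}$. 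Since $g_n$ and $g$ agree on $\{0,\dots,N_n\}$ and $g(k)\to g(\infty)$, one has $\|g_n-g\|_\infty\le|v_n-g(\infty)|+\sup_{k>N_n}|g(k)-g(\infty)|$, so \eqref{eq:98} reduces to proving $v_n\to g(\infty)$.

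The hard part is exactly this last convergence, and it hinges on the partial masses $m_n:=\sum_{k\le N_n}\nu^{(n)}_k$ tending to infinity. From
\[
|v_n-g(\infty)|\le\frac{\sum_{k\le N_n}\nu^{(n)}_k|g(k)-g(\infty)|+\gamma^{(n)}|g(\infty)|}{\gamma^{(n)}+m_n},
\]
splitting the numerator sum at a fixed index $K$ with $|g(k)-g(\infty)|<\varepsilon$ for $k\ge K$ bounds the right-hand side by $\bigl(2\|g\|_\infty\sum_{k<K}\nu^{(n)}_k+\gamma^{(n)}|g(\infty)|\bigr)/m_n+\varepsilon$, whence $\limsup_n|v_n-g(\infty)|\le\varepsilon$ once $m_n\to\infty$, using $\nu^{(n)}_k\to\nu_k$ for the fixed finite sum. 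It thus remains to secure $m_n\to\infty$. This is the only delicate point: because the target is Feller, the presence of infinitely many Doob indices (where $\beta^{(n)}=0$) together with $\beta^{(n)}\to\beta$ forces $\beta=0$ and hence $|\nu|=\infty$; the pointwise convergence $\nu^{(n)}_k\to\nu_k$ then yields $|\nu^{(n)}|\to\infty$ along those indices, so $N_n\uparrow\infty$ can be chosen with $m_n\to\infty$ (any finite set of Doob indices is irrelevant to the limit). Once $m_n\to\infty$ is in hand, $v_n\to g(\infty)$, \eqref{eq:98} follows, and the reduction above then delivers \eqref{eq:914}.
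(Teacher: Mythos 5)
Your reduction of part (2) to part (1) via $f=R_\alpha h$, $f_n:=R^{(n)}_\alpha h_n$, contractivity of the resolvents and Theorem~\ref{THM42}(1b) is exactly the paper's argument, so the substance lies in part (1). There your treatment of the Doob subsequence is genuinely different from the paper's. The paper reduces to $g=R_\alpha h\in\cD(\sL)$ and corrects $g$ on a \emph{fixed} finite set, $g_n:=g+\chi_n 1_{\{0,\dots,N\}}$, choosing $\chi_n$ to enforce the boundary condition; proving $\chi_n\to0$ then requires the resolvent structure of $g$ (to see that $\sum_k(g(\infty)-g(k))\nu^{(n)}_k$ is even finite, via \eqref{eq:56-2} and \eqref{eq:210}) and the full strength of Definition~\ref{DEF51}, in particular \eqref{eq:52}. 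You instead overwrite $g$ on a \emph{growing tail} $\{k>N_n\}$ by the constant $v_n$ dictated by \eqref{eq:611-3}; since only finite sums appear, no finiteness issue arises, no reduction to $\cD(\sL)$ is needed, and the convergence $v_n\to g(\infty)$ uses only the pointwise convergence $\nu^{(n)}_k\to\nu_k$, $\gamma^{(n)}\to\gamma$ and the divergence $m_n=\sum_{k\le N_n}\nu^{(n)}_k\to\infty$ — condition \eqref{eq:52} is not used at all. Your verification that $g_n$ satisfies the $n$-th boundary condition and the $\varepsilon$-splitting argument for $|v_n-g(\infty)|$ are both correct. What your approach buys is elementarity; what it costs is generality, as follows.

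The one caveat is scope: you assume from the outset that the target triple lies in $\mathscr Q_F$, whereas the lemma (and the paper's proof) also covers a Doob target, i.e.\ $\bC=\bC_{\nu,\gamma}$ with $0<|\nu|<\infty$. Your argument genuinely needs $m_n\to\infty$, hence $|\nu|=\infty$, which is forced only when the target is Feller (with $\beta=0$); for a Doob target the denominator $\gamma^{(n)}+m_n$ stays bounded and showing that the numerator $\sum_{k\le N_n}\nu^{(n)}_k\bigl(g(k)-g(\infty)\bigr)-\gamma^{(n)}g(\infty)$ tends to $0$ would again require \eqref{eq:52} and the representation $g=R_\alpha h$ — essentially the paper's route. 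Since every downstream application of the lemma (Theorems~\ref{THM55} and \ref{THM57}) assumes $(\gamma,\beta,\nu)\in\mathscr Q_F$, this restriction is harmless in practice, but you should flag explicitly that you are proving a weaker statement than the one asserted.
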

\begin{proof}
The sequence of processes $X^{(n)}$ can be divided into at most two subsequences: one consisting of Feller $Q$-processes and the other consisting of Doob processes. Thus, it is sufficient to consider each subsequence separately. First, we will examine the subsequence,  still denoted by $X^{(n)}$,  consisting of Feller $Q$-processes.  Note that $\mathbf{C}_n = C(\overline{\mathbb{N}})$ for all $n\in \bN$.  Hence, the existence of $g_n$ is evident.  For $f=R_\alpha h\in \cD(\sL)$ with $h\in \bC$,  we can take $f_n:=R^{(n)}_\alpha h \in \cD(\sL^{(n)})$,  which satisfies \eqref{eq:914} by (1b) of Theorem~\ref{THM42} and the Hille-Yosida theorem.  

It remains to consider the subsequence consisting of Doob processes.  From now on, we assume that all $X^{(n)}$ are Doob processes.  According to Definition~\ref{DEF51},  we have $\beta=\lim_{n\rightarrow \infty}\beta^{(n)}=0$. 

 (1) Assume without loss of generality that $g=R_\alpha h\in \cD(\sL)$ with $h\in \bC$.  According to \eqref{eq:21-2} or Theorem~\ref{THM64}, we have
 \begin{equation}\label{eq:55}
 	\sum_{k\in \bN} (g(\infty)-g(k))\nu_k +\gamma g(\infty)=0.
 \end{equation}
 Note that $|\nu|>0$ because of the condition \eqref{eq:B1}.   Let $N\in \bN$ such that $\sum_{k=0}^N\nu_k>0$,  and assume without loss of generality that $\sum_{k=0}^N\nu^{(n)}_k>0$ for all $n\geq 1$.  Define
 \[
 	g_n:=g+\chi_n \cdot 1_{\{0,1,\cdots, N\}},
 \]
 where $$\chi_n:=\frac{\sum_{k\in \bN}(g(\infty)-g(k))\nu^{(n)}_k+\gamma^{(n)}g(\infty)}{\sum_{k=0}^N\nu^{(n)}_k}.$$  We explain why $\sum_{k\in \bN}(g(\infty)-g(k))\nu^{(n)}_k$ is finite in the definition of $\chi_n$.  In fact,  according to \eqref{eq:215},  we have
 \[
 	g(\infty)-g(k)=R_\alpha h(\infty)-R_\alpha h(k)=R_\alpha h(\infty)(1-u^\text{min}_\alpha(k))-R^\text{min}_\alpha h(k). 
 \]
 Thus,
 \begin{equation}\label{eq:56-2}
 	\sum_{k\in \bN}(g(\infty)-g(k))\nu^{(n)}_k=R_\alpha h(\infty)\cdot \nu^{(n)}(1-u^\text{min}_\alpha)-\nu^{(n)}(R_\alpha^\text{min}h),
 \end{equation}
 which is finite due to \eqref{eq:210}.  In addition,  it follows from \eqref{eq:52} and Lemma~\ref{LM53} that
 \[
 \begin{aligned}
 	\lim_{n\rightarrow \infty}\sum_{k\in \bN}(g(\infty)-g(k))\nu^{(n)}_k&=R_\alpha h(\infty)\cdot \nu(1-u^\text{min}_\alpha)-\nu(R_\alpha^\text{min}h) \\
 	&=\sum_{k\in \bN}(g(\infty)-g(k))\nu_k.  
\end{aligned} \]
Hence,  $\chi_n\rightarrow 0$ by \eqref{eq:55},  which yields that $\|g_n-g\|_\infty\rightarrow 0$.  Finally,  it is straightforward to verify that $g_n\in C(\overline{\bN})$ with $\sum_{k\in \bN} (g_n(\infty)-g_n(k))\nu^{(n)}_k +\gamma^{(n)} g_n(\infty)=0$ by using the definition of $\chi_n$.  In other words,  $g_n\in \bC^{(n)}$.  
 
 (2) Let $g:=f-\sL f\in \bC$.  By applying the first assertion to this $g$,  we obtain a sequence $g_n\in \bC^{(n)}$ that satisfies \eqref{eq:98}.  Define $f_n:=R^{(n)}_1 g_n\in \cD(\sL^{(n)})$.  Since $f=R_1g$,  it follows from (1b) of Theorem~\ref{THM42} and \eqref{eq:98} that
 \[
 \begin{aligned}
 	\|f_n-f\|_\infty&\leq \left\|R^{(n)}_1 g_n-R_1^{(n)}g\right\|_\infty+\left\|R^{(n)}_1g-R_1g\right\|_\infty \\
 	&\leq \left\|g_n-g\right\|_\infty+\left\|R^{(n)}_1g-R_1g\right\|_\infty\rightarrow 0. 
 \end{aligned}\] 
 Additionally,  we note that
\[
	\sL^{(n)} f_n=f_n-g_n,\quad \sL f=f-g.  
\]	
Therefore, we can also conclude that $\left\|\sL^{(n)} f_n-\sL f\right\|_\infty\rightarrow 0$.  
\end{proof}

Let $\mathcal{P}(\bN_\partial)$ denote the family of all probability measures on $\bN_\partial$.  A sequence of measures $\lambda^{(n)}\in \mathcal{P}(\bN_\partial)$ is said to converge weakly to $\lambda\in \mathcal{P}(\bN_\partial)$ if $\lim_{n\rightarrow \infty}\lambda^{(n)}(f)=\lambda(f)$ for all $f\in C_b(\bN_\partial)$.  It is worth noting that this weak convergence is equivalent to the vague convergence,  i.e.,  $\lim_{n\rightarrow \infty}\lambda^{(n)}(f)=\lambda(f)$ for all $f\in C_c(\bN_\partial)$; see,  e.g.,  \cite[\S7.3, Exercise~26]{F99}. 

Now we are ready to prove the convergence of $X^{(n)}$ in finite-dimensional distributions to a Feller $Q$-process $X$.  

\begin{theorem}\label{THM55}
Assume that $(\gamma^{(n)}, \beta^{(n)}, \nu^{(n)})$ converges to $(\gamma,\beta,\nu)$ in the sense of Definition~\ref{DEF51},  and that $(\gamma,\beta,\nu)\in \mathscr Q_F$.  Then 
\begin{equation}\label{eq:55-2}
\lim_{n\rightarrow \infty}\sup_{t\in [0,T], k\in \bN_\partial}\left|P^{(n)}_tf(k)-P_tf(k)\right|=0
\end{equation}
 for all $f\in C(\overline{\bN}_\partial)$ and $T\geq 0$.  
Furthermore,  if $\lambda^{(n)}\in \mathcal{P}(\bN_\partial)$ converges weakly to $\lambda\in \mathcal{P}(\bN_\partial)$,  then for any $0= t_1<\cdots<t_N$, and  $f_1,\cdots, f_N\in C(\overline{\bN}_\partial)$ with $N\geq 1$,  
\begin{equation}\label{eq:56}
	\lim_{n\rightarrow \infty}\bE^{(n)}_{\lambda^{(n)}} \left(f_1(X_{t_1})\cdots f_N(X_{t_N}) \right)=\bE_\lambda \left(f_1(X_{t_1})\cdots f_N(X_{t_N}) \right).
\end{equation}
\end{theorem}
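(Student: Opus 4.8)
The plan is to prove the two assertions in sequence, treating \eqref{eq:55-2} as the main analytic input and then bootstrapping from it to the finite-dimensional convergence \eqref{eq:56}. Since the target process $X$ is a Feller $Q$-process, we have $\bC = C(\overline{\bN})$, and Lemma~\ref{LM54} supplies, for each $f \in \cD(\sL)$, an approximating sequence $f_n \in \cD(\sL^{(n)})$ with $\|f_n - f\|_\infty + \|\sL^{(n)} f_n - \sL f\|_\infty \to 0$. This is precisely the hypothesis needed to invoke the Trotter--Kurtz semigroup approximation theorem: generator convergence in the sense of Lemma~\ref{LM54}(2), together with the fact that each $(P^{(n)}_t)$ and $(P_t)$ is a strongly continuous contraction semigroup on the respective Banach space, yields $\sup_{t \in [0,T]} \|P^{(n)}_t f - P_t f\|_\infty \to 0$ for $f$ in a core, hence for all $f \in \bC = C(\overline{\bN})$. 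The estimate \eqref{eq:55-2} then follows for $f \in C(\overline{\bN})$ by taking the supremum over $k \in \bN$ (recalling $P^{(n)}_t f(\partial) = P_t f(\partial) = 0$), and the extension to $f \in C(\overline{\bN}_\partial)$ with $f(\partial) \neq 0$ is handled by the standard decomposition $f = f_0 + f(\partial)$ with $f_0 := f - f(\partial) \in C(\overline{\bN})$, noting that the constant part contributes $f(\partial)(1 - u^{(n)}_\cdot)$-type terms that converge uniformly by Theorem~\ref{THM42}.

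For the finite-dimensional statement \eqref{eq:56}, I would argue by induction on $N$. The key structural observation is that, because $t_1 = 0$ and $X$ is a Feller $Q$-process so that $P_0(x,\cdot) = \delta_x$ holds on all of $\overline{\bN}_\partial$ (the normal property), the initial term factors cleanly: one writes the expectation using the Markov property as an iterated application of the semigroups,
\[
\bE^{(n)}_{\lambda^{(n)}}\!\left(f_1(X_{t_1}) \cdots f_N(X_{t_N})\right) = \lambda^{(n)}\!\left(f_1 \cdot P^{(n)}_{t_2 - t_1}\!\left(f_2 \cdot P^{(n)}_{t_3 - t_2}(\cdots)\right)\right),
\]
and similarly for the limit with $\lambda$ and $(P_t)$. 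The inductive step reduces the $N$-fold product to an $(N-1)$-fold product after one application of the semigroup, so the core task is to show that if $g_n \to g$ uniformly with $g_n, g \in C(\overline{\bN}_\partial)$, then $f_1 \cdot P^{(n)}_{t_2}g_n \to f_1 \cdot P_{t_2}g$ uniformly, which follows by splitting $\|P^{(n)}_{t_2}g_n - P_{t_2}g\|_\infty \leq \|P^{(n)}_{t_2}(g_n - g)\|_\infty + \|P^{(n)}_{t_2}g - P_{t_2}g\|_\infty$, bounding the first term by contractivity and the second by \eqref{eq:55-2}. At the outermost layer, the integration against $\lambda^{(n)}$ versus $\lambda$ is matched using the weak convergence $\lambda^{(n)} \to \lambda$, applied to the continuous bounded test function produced by the inner semigroup applications.

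The main obstacle I anticipate is the interplay at the cemetery point $\partial$ and the subtlety that the approximating processes $X^{(n)}$ need not all be Feller $Q$-processes — some may be Doob processes, for which $P^{(n)}_0$ is \emph{not} the identity at $\infty$ (one has $P^{(n)}_0(\infty,\cdot) = \pi^{(n)}$ rather than $\delta_\infty$). However, since $\lambda^{(n)} \in \mathcal{P}(\bN_\partial)$ is supported off the branching point $\infty$, the initial distribution $\lambda^{(n)} P^{(n)}_0$ coincides with $\lambda^{(n)}$, so the branching anomaly of the Doob approximants is never activated by the chosen initial laws; this is why the hypothesis $\lambda^{(n)} \in \mathcal{P}(\bN_\partial)$ (not $\mathcal{P}(\overline{\bN}_\partial)$) is the right one. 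The remaining care is that the uniform convergence in \eqref{eq:55-2} must be genuinely uniform over $k \in \bN_\partial$ and not merely pointwise, since the weak convergence of $\lambda^{(n)}$ only controls integration against continuous functions and the integrands $P^{(n)}_t g$ vary with $n$; the uniform bound in \eqref{eq:55-2} is exactly what lets one pass to the limit in $\lambda^{(n)}(P^{(n)}_t g_n)$ by adding and subtracting $\lambda^{(n)}(P_t g)$ and using $\lambda^{(n)} \to \lambda$ on the fixed continuous function $P_t g$.
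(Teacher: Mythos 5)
Your overall strategy coincides with the paper's: the first assertion is obtained from the generator/resolvent approximation of Lemma~\ref{LM54} via a Trotter--Kurtz type argument (the paper proves the semigroup convergence by hand, applying Lemma~\ref{LM91} to $F_n(t):=P^{(n)}_tf_n-P_tf$ and then running a $4\varepsilon$ density argument in $\bC=C(\overline{\bN})$), and the second assertion is obtained by the same backward induction through the Markov property, with the weak convergence $\lambda^{(n)}\to\lambda$ used only at the outermost layer. Your remarks about the cemetery point, the normal property at $t_1=0$, and why $\lambda^{(n)}\in\mathcal{P}(\bN_\partial)$ is the right hypothesis are all correct and consistent with the paper.

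There is, however, one concrete gap in your induction for \eqref{eq:56}. Your inductive step is stated as: if $g_n,g\in C(\overline{\bN}_\partial)$ and $\|g_n-g\|_\infty\to 0$, then $f_1\cdot P^{(n)}_{t_2}g_n\to f_1\cdot P_{t_2}g$ uniformly. The problem is that the output of one step is the input of the next, and when $X^{(n)}$ is a Doob process the function $P^{(n)}_{t_2}g_n$ (hence $f_j\cdot P^{(n)}_{t_2}g_n$ for a general $f_j\in C(\overline{\bN}_\partial)$) need \emph{not} belong to $C(\overline{\bN}_\partial)$ --- the Feller property fails at the branching point $\infty$, and there is no reason for $P^{(n)}_{t}g_n(\infty)$ to be close to $P_{t}g(\infty)$. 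So the hypothesis ``$g_n\in C(\overline{\bN}_\partial)$ with $\|g_n-g\|_\infty\to 0$'' (uniform norm over all of $\overline{\bN}_\partial$) is not maintained, and the contractivity bound $\|P^{(n)}_{t}(g_n-g)\|_\infty\le\|g_n-g\|_\infty$ that you invoke for the first error term is exactly where this bites: it requires control of $(g_n-g)(\infty)$, which you do not have. The paper avoids this by first observing that the finite-dimensional laws $\varrho_n,\varrho$ are carried by $(\bN_\partial)^N$, so that by vague convergence it suffices to take $f_1,\dots,f_N\in C_c(\bN_\partial)$; then every iterate $g_j^{(n)}=f_j\cdot P^{(n)}_{t_{j+1}-t_j}g_{j+1}^{(n)}$ lies in $C_c(\bN_\partial)$ (the cutoff $f_j$ annihilates whatever $P^{(n)}$ does near and at $\infty$), and the induction closes. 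Your argument can alternatively be repaired by restricting all suprema to $k\in\bN_\partial$ and checking that $P^{(n)}_t(k,\{\infty\})=0$ for $k\in\bN_\partial$ (which holds because $P^{(n)}_t=P^{(n)}_tP^{(n)}_0$ and $P^{(n)}_0(y,\{\infty\})=0$ for every $y$), but some such step must be supplied; as written the induction does not go through for Doob approximants.
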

\begin{proof}
Consider $f\in \cD(\sL)$, and take a sequence $f_n\in  \cD(\sL^{(n)})$ satisfying \eqref{eq:914}.  We will first apply Lemma~\ref{LM91} with $\mathbf{B}=C(\overline{\bN})$ and $F_n(t):=P^{(n)}_tf_n-P_tf\in \mathbf{B}$ to conclude that
\begin{equation}\label{eq:915-2}
\lim_{n\rightarrow \infty} \sup_{0\leq t\leq T}\left\|P^{(n)}_tf_n-P_tf\right\|_\infty=0.  
\end{equation}
Clearly, the first condition (i) and \eqref{eq:90} in Lemma~\ref{LM91} hold for this $F(t)$.  It suffices to prove the equicontinuity of $F(t)$.  In fact,  the Hille-Yosida theorem indicates that
\[
	F_n(t)-F_n(t')=\int_{t'}^t \left(P^{(n)}_s \sL^{(n)} f_n-P_s \sL f\right)ds.
\]
Since $\|P^{(n)}_s \sL^{(n)} f_n-P_s \sL f\|_\infty\leq \|\sL^{(n)} f_n\|_\infty+\|\sL f\|_\infty \rightarrow 2\|\sL f\|_\infty$,  it is straightforward to obtain the equicontinuity of $F_n(t)$.  Therefore,  \eqref{eq:915-2} is established.  

We are now in a position to demonstrate \eqref{eq:55-2}.  Since $P^{(n)}_t1_{\overline{\bN}_\partial}=P_t1_{\overline{\bN}_\partial}\equiv 1$,  it is sufficient to consider $f\in C(\overline{\bN})$.  
Take an arbitrary $\varepsilon>0$.  We need to show that there exists an integer $N$ such that for any $n>N$,  
\begin{equation}\label{eq:915}
\sup_{t\in [0,T],  k\in \bN}\left|P^{(n)}_t f(k)-P_tf(k)\right|<\varepsilon.
\end{equation}
Since $\bC=C(\overline{\bN})$,  we can take $\tilde{f}\in \cD(\sL)$ such that $\|\tilde{f}-f\|_\infty<\varepsilon/4$.  Let  $\tilde{f}_n\in \cD(\sL^{(n)})$ be the sequence for $\tilde{f}$ as in \eqref{eq:914},  i.e., 
\[
		\left\|\tilde f_n-\tilde f\right\|_\infty+\left\|\sL^{(n)} \tilde f_n-\sL \tilde f\right\|_\infty\rightarrow 0.
\]
It follows from \eqref{eq:915-2} that
\[
\lim_{n\rightarrow \infty} \sup_{0\leq t\leq T}\left\|P^{(n)}_t \tilde f_n-P_t\tilde f\right\|_\infty=0.  
\]
Particularly,  there exists an integer $N$ such that for any $n>N$, 
\begin{equation}\label{eq:917}
	\left\|\tilde f_n-\tilde f\right\|_\infty<\varepsilon/4,\quad \sup_{0\leq t\leq T}\left\|P^{(n)}_t \tilde f_n-P_t\tilde f\right\|_\infty<\varepsilon/4.  
\end{equation}
Note that for any $k\in \bN,  t\in [0,T]$ and $n>N$,  
\[
\begin{aligned}
|P^{(n)}_t f(k)-P_tf(k)|&\leq |P^{(n)}_t f(k)-P^{(n)}_t \tilde{f}(k)| +|P^{(n)}_t \tilde f(k)-P^{(n)}_t \tilde{f}_n(k)| \\&\qquad +|P^{(n)}_t \tilde f_n(k)-P_t \tilde{f}(k)| +|P_t \tilde f(k)-P_t f(k)|  \\
&\leq \|f-\tilde{f}\|_\infty+\|\tilde{f}-\tilde{f}_n\|_\infty+\|P^{(n)}_t \tilde f_n-P_t \tilde{f}\|_\infty+\|\tilde{f}-f\|_\infty.
\end{aligned}\]
By means of $\|\tilde{f}-f\|_\infty<\varepsilon/4$ and \eqref{eq:917},  we can obtain \eqref{eq:915}.  

Before proving \eqref{eq:56}, let us clarify two facts. Firstly, it is not difficult to derive from \eqref{eq:55-2} that 
\begin{equation}\label{eq:511}
\lim_{n\rightarrow \infty}\sup_{t\in [0,T], k\in \bN_\partial}\left|P^{(n)}_tg_n(k)-P_tg(k)\right|=0
\end{equation}
 holds for any functions $g_n,g\in C(\overline{\bN}_\partial)$ satisfying $\|g_n-g\|_\infty \rightarrow 0$. Secondly, let $\varrho_n(\cdot):=\bP^{(n)}_{\lambda^{(n)}}\left(\left(X_{t_1},\cdots, X_{t_N}\right)\in \cdot\right)$ and $\varrho(\cdot):=\bP_{\lambda}\left(\left(X_{t_1},\cdots, X_{t_N}\right)\in \cdot\right)$. It can be shown that $\varrho_n$ and $\varrho$ are probability measures on $\left(\overline{\bN}_\partial\right)^N$ (the $N$-fold product space of $\overline{\bN}_\partial$)  satisfying $\varrho_n\left(\left(\bN_\partial\right)^N\right)=\varrho\left(\left(\bN_\partial\right)^N\right)=1$. Thus, according to \cite[\S7.3, Exercise 26]{F99},  to prove \eqref{eq:56} as required, it is equivalent to proving that $\varrho_n$ converges vaguely to $\varrho$ on $\left(\bN_\partial\right)^N$. In other words, we can assume without loss of generality that $f_1,\cdots, f_N\in C_c(\bN_\partial)$ in \eqref{eq:56}.
 
 By utilizing \eqref{eq:55-2} for $f_N$,  we have
 \begin{equation}\label{eq:513-2}
 	\sup_{k\in \bN_\partial}\left|P^{(n)}_{t_N-t_{N-1}}f_N(k)-P_{t_N-t_{N-1}}f_N(k)\right|\rightarrow 0.
 \end{equation}
 Although $P^{(n)}_{t_N-t_{N-1}}f_N$ may not be in $C(\overline{\bN}_\partial)$,  it holds that
 \[
 	g_{N-1}^{(n)}:=f_{N-1}P^{(n)}_{t_N-t_{N-1}}f_N\in C_c(\bN_\partial),\quad g_{N-1}:=f_{N-1}P_{t_N-t_{N-1}}f_N\in C_c(\bN_\partial)
 \]
because of $f_{N-1}\in C_c(\bN_\partial)$.  Then, \eqref{eq:513-2}  indicates 
 \[
 	\lim_{n\rightarrow \infty}\left\|g_{N-1}^{(n)}-g_{N-1}\right\|_\infty=0.
 	\]
 	It follows from \eqref{eq:511} that
 	\[
 	\begin{aligned}
\sup_{k\in \bN_\partial}&\left|P^{(n)}_{t_{N-1}-t_{N-2}}\left( f_{N-1}P^{(n)}_{t_N-t_{N-1}}f_N\right)-P_{t_{N-1}-t_{N-2}}\left( f_{N-1}P_{t_N-t_{N-1}}f_N\right)\right| \\
&=\sup_{k\in \bN_\partial}\left| P^{(n)}_{t_{N-1}-t_{N-2}}g^{(n)}_{N-1}-P_{t_{N-1}-t_{N-2}}g_{N-1} \right|\rightarrow 0.
 \end{aligned}	\]
 By induction,  we obtain that
 \[
 \begin{aligned}
 	&g^{(n)}_1:=f_1 P^{(n)}_{t_2-t_1}\left(f_2\cdots P^{(n)}_{t_{N-1}-t_{N-2}}\left( f_{N-1}P^{(n)}_{t_N-t_{N-1}}f_N \right)\right)\in C_c(\bN_\partial),  \\
 	&g_1:=f_1 P_{t_2-t_1}\left(f_2\cdots P_{t_{N-1}-t_{N-2}}\left( f_{N-1}P_{t_N-t_{N-1}}f_N \right)\right)\in C_c(\bN_\partial),
 \end{aligned}\]
 and 
 \[
 \lim_{n\rightarrow \infty}\left\|g_{1}^{(n)}-g_{1}\right\|_\infty=0.
 \]
 Since $\lambda^{(n)}$ converges weakly to $\lambda$,  it is easy to verify that $\lambda^{(n)}(g^{(n)}_1)\rightarrow \lambda(g_1)$.  This is precisely the desired \eqref{eq:56}.  
\end{proof}
\begin{remark}
It is worth noting that \eqref{eq:55-2} is stronger than either condition in Theorem~\ref{THM42},  since by the dominated convergence theorem,  \eqref{eq:55-2} implies condition (1a) in Theorem~\ref{THM42}. 
\end{remark}

\subsection{Weak convergence}

Under the assumption stated in Theorem~\ref{THM55}, we proceed to examine the convergence of $X^{(n)}$. As discussed in \S~\ref{SEC26}, the process $X^{(n)}$ with initial distribution $\lambda^{(n)}$ can be represented as a probability measure $\bP^{(n)}_{\lambda^{(n)}}$ on the Skorohod topological space $D_{\overline{\bN}_\partial}[0,\infty)$. Similarly, the process $X$ with initial distribution $\lambda$ can be represented as a probability measure $\bP_{\lambda}$ on $D_{\overline{\bN}_\partial}[0,\infty)$.  We aim to establish the weak convergence of $\bP^{(n)}_{\lambda^{(n)}}$ to $\bP_\lambda$.

\begin{theorem}\label{THM57}
Assume that $(\gamma^{(n)}, \beta^{(n)}, \nu^{(n)})$ converges to $(\gamma,\beta,\nu)\in \mathscr Q_F$ in the sense of Definition~\ref{DEF51}.   If $\lambda^{(n)}\in \mathcal{P}(\bN_\partial)$ converges weakly to $\lambda\in \mathcal{P}(\bN_\partial)$,  then $\bP^{(n)}_{\lambda^{(n)}}$ converges weakly to $\bP_\lambda$ on the Skorohod topological space $D_{\overline{\bN}_\partial}[0,\infty)$, i.e., 
\begin{equation}\label{eq:512}
	\lim_{n\rightarrow \infty}\int_{D_{\overline{\bN}_\partial}[0,\infty)}F(w)\bP^{(n)}_{\lambda^{(n)}}(dw)=\int_{D_{\overline{\bN}_\partial}[0,\infty)}F(w)\bP_{\lambda}(dw)
\end{equation}
for all $F\in C_b\left(D_{\overline{\bN}_\partial}[0,\infty); d \right)$,  where $C_b\left(D_{\overline{\bN}_\partial}[0,\infty);d\right)$ denotes the family of all bounded continuous functions on $D_{\overline{\bN}_\partial}[0,\infty)$ equipped with the Skorohod topology.  
\end{theorem}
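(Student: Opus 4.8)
The plan is to deduce \eqref{eq:512} from the two classical ingredients for weak convergence on a Skorohod space: convergence of the finite-dimensional distributions together with relative compactness of the family $\{\bP^{(n)}_{\lambda^{(n)}}\}$ (see, e.g., \cite[\S3]{EK09}). The first ingredient is exactly \eqref{eq:56} of Theorem~\ref{THM55}. Moreover, since $(\gamma,\beta,\nu)\in \mathscr Q_F$, the limit $X$ is a Feller $Q$-process, so $(P_t)_{t\ge 0}$ is strongly continuous on $C(\overline{\bN}_\partial)$ and $X$ has no fixed times of discontinuity; hence the convergence of the finite-dimensional laws at \emph{all} times $0=t_1<\cdots<t_N$ provided by Theorem~\ref{THM55} is enough to identify the limit once relative compactness is in hand. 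Because $D_{\overline{\bN}_\partial}[0,\infty)$ is complete and separable, relative compactness coincides with tightness (Prohorov), and because the state space $\overline{\bN}_\partial$ is itself compact, the compact containment condition is automatic.

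The decisive step is therefore to verify Aldous's modulus-of-continuity condition uniformly in $n$, and here I would exploit the uniform semigroup convergence \eqref{eq:55-2} of Theorem~\ref{THM55}, which crucially holds for \emph{every} $f\in C(\overline{\bN}_\partial)$. Since each $X^{(n)}$ is a Ray process (\S\ref{SEC26}) and hence strong Markov, for any stopping time $\tau\le T$, any $0\le h\le \delta$, and any $f\in C(\overline{\bN}_\partial)$,
\[
	\bE^{(n)}_{\lambda^{(n)}}\!\left[\left(f(X_{\tau+h})-f(X_\tau)\right)^2\,\middle|\,\sF_\tau\right]
	=\left(P^{(n)}_h(f^2)-2f\,P^{(n)}_h f+f^2\right)(X_\tau),
\]
whose absolute value is at most $\|P^{(n)}_h(f^2)-f^2\|_\infty+2\|f\|_\infty\|P^{(n)}_h f-f\|_\infty$. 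Writing, for $g\in\{f,f^2\}\subset C(\overline{\bN}_\partial)$,
\[
	\sup_{h\le \delta}\|P^{(n)}_h g-g\|_\infty\le \sup_{h\le\delta}\|P^{(n)}_h g-P_h g\|_\infty+\sup_{h\le\delta}\|P_h g-g\|_\infty,
\]
the first term tends to $0$ as $n\to\infty$ by \eqref{eq:55-2} and the second tends to $0$ as $\delta\to 0$ by strong continuity of the Feller semigroup $(P_t)_{t\ge 0}$; consequently $\limsup_{n}\sup_{h\le\delta}\|P^{(n)}_h g-g\|_\infty\to 0$ as $\delta\to 0$. By Markov's inequality this yields Aldous's condition for the real-valued functional $f(X^{(n)}_\cdot)$, uniformly in $n$. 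Finally, since $\overline{\bN}_\partial$ is homeomorphic to a compact subset of $\mathbb{R}$, the metric $r$ can be controlled by finitely many functions in $C(\overline{\bN}_\partial)$, so the Aldous condition for the $\overline{\bN}_\partial$-valued processes $X^{(n)}$ themselves follows, giving relative compactness of $\{\bP^{(n)}_{\lambda^{(n)}}\}$.

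It then remains to assemble the pieces. The initial laws cause no difficulty: as $\lambda^{(n)}\in \mathcal P(\bN_\partial)$ does not charge $\infty$, one has $\lambda^{(n)}P^{(n)}_0=\lambda^{(n)}$, which converges weakly to $\lambda$ by hypothesis. Combining the finite-dimensional convergence of Theorem~\ref{THM55} with the relative compactness just established yields $\bP^{(n)}_{\lambda^{(n)}}\Rightarrow \bP_\lambda$ on $D_{\overline{\bN}_\partial}[0,\infty)$, which is precisely \eqref{eq:512}. I expect the tightness step to be the main obstacle, and the point that makes it go through is that Theorem~\ref{THM55} delivers uniform convergence of the semigroups on all of $C(\overline{\bN}_\partial)$ (not merely on $C(\overline{\bN})$): this is exactly what permits the strong-Markov second-moment identity to be applied to both $f$ and $f^2$, thereby sidestepping the non-Feller nature of the Doob approximants and the possibility that $|\nu^{(n)}|=\infty$.
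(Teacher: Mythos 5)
Your proof is correct in substance, but it follows a genuinely different route from the paper's. The paper first splits the approximating sequence into a Feller subsequence (handled by citing \cite[\S4, Theorem~2.5]{EK09}) and a Doob subsequence; for the latter it pushes the laws forward to $D_{\bR^k}[0,\infty)$ via a map $G=(g_1,\dots,g_k)$, proves relative compactness there using the criterion of \cite[\S3, Theorem~9.4]{EK09} by exhibiting the pair $Y^{(n)}_t=\alpha_0 R^{(n)}_{\alpha_0}g_n(X_t)$, $Z^{(n)}_t=\alpha_0(\alpha_0 R^{(n)}_{\alpha_0}g_n-g_n)(X_t)$ with $g_n\in\bC^{(n)}$ supplied by Lemma~\ref{LM54}(1), and then combines \eqref{eq:56} with \cite[\S3, Corollary~9.2]{EK09}. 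You instead work directly with the $\overline{\bN}_\partial$-valued processes and verify Aldous's criterion, getting the key bound from the strong Markov property and the uniform semigroup convergence \eqref{eq:55-2} applied to $f$ and $f^2$; this avoids the case split, the image-measure detour, and the approximate-martingale construction, at the price of invoking the strong Markov property of the Ray realizations at stopping times (which does hold here) rather than only the deterministic-time Markov property used in the paper. One point you should make explicit: the supremum in \eqref{eq:55-2} runs over $k\in\bN_\partial$ only, whereas your second-moment identity evaluates $P^{(n)}_h(f^2)-2fP^{(n)}_hf+f^2$ at $X_\tau$; this is legitimate because for the Doob approximants $\infty$ is a branching point and $X_t\in\bN_\partial$ for all $t$ almost surely (a fact the paper also uses), while for Feller approximants the functions involved lie in $C(\overline{\bN}_\partial)$ so the supremum over $\bN_\partial$ coincides with that over $\overline{\bN}_\partial$. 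With that remark added, and noting that the single embedding $\overline{\bN}_\partial\hookrightarrow\bR$ turns the Aldous condition for a scalar functional into the one for the processes themselves, your argument is complete.
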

\begin{proof}
The sequence of processes $X^{(n)}$ can be divided into at most two subsequences. One subsequence consists of Doob processes, while the other subsequence consists of Feller $Q$-processes. Therefore, it is sufficient to prove \eqref{eq:512} separately for each of these subsequences. For the subsequence consisting of Feller $Q$-processes, we can directly apply the result from \cite[\S4 Theorem~2.5]{EK09}. Further details can be found in the proof of \cite[Theorem~9.2]{L23}. Now, let us focus on the subsequence consisting of Doob processes. Without loss of generality, we assume that all $X^{(n)}$ are Doob processes.


Let $D_{\mathbb{R}^k}[0,\infty)$ denote the Skorohod topological space consisting of all c\`adl\`ag functions on $\mathbb{R}^k$ for $k\geq 1$.  For $g_1,\cdots,  g_k\in C(\overline{\bN}_\partial)$,  the vector-valued function $G:=(g_1,\cdots, g_k)$ induces a Borel measurable map
\[
	G: D_{\overline{\bN}_\partial}[0,\infty)\rightarrow D_{\bR^k}[0,\infty),\quad w\mapsto w_G,
\]
where $w_G(t):=(g_1(w(t)),\cdots, g_k(w(t)))$.  Thus,  the image measures 
\[
	\hat\bP^{(n)}:=\bP^{(n)}_{\lambda^{(n)}}\circ G^{-1},\quad \hat \bP:=\bP_\lambda\circ G^{-1}
\]
are probability measures on $D_{\bR^k}[0,\infty)$.  We aim to show the weak convergence of $\hat\bP^{(n)}$ to $\hat \bP$ on $D_{\bR^k}[0,\infty)$,  and then, \eqref{eq:512} follows by applying \cite[\S3, Corollary~9.2]{EK09}.

We first demonstrate that the sequence $\{\hat\bP^{(n)}: n\geq 1\}$ of probability measures on $D_{\bR^k}[0,\infty)$ is relatively compact.  According to \cite[\S3 Theorem~9.4 and Remark~9.5~(b)]{EK09},  it suffices to consider the case $k=1$ and, for fixed $g=g_1\in C(\overline{\bN}_\partial)$,  $\varepsilon>0$,  and $T>0$, to find a pair of real-valued progressive processes $(Y^{(n)}_t,  Z^{(n)}_t)$ on $(\Omega, \sF^{(n)}, \bP^{(n)}_{\lambda^{(n)}})$,  adapted to the filtration $\sF^{(n)}_t$ of $X^{(n)}$,  for all $n\geq 1$,  satisfying the following conditions:
\begin{itemize}
\item[(i)] $\sup_{t\geq 0}\bE^{(n)}_{\lambda^{(n)}} \left|Y^{(n)}_t\right|,  \sup_{t\geq 0}\bE^{(n)}_{\lambda^{(n)}} \left|Z^{(n)}_t\right|<\infty$, and 
\begin{equation}\label{eq:513}
Y^{(n)}_t-\int_0^t Z^{(n)}_sds
\end{equation}
 is an $\sF^{(n)}_t$-martingale.
\item[(ii)] It holds that
\begin{equation}\label{eq:921}
\limsup_{n\rightarrow \infty}\bE^{(n)}_{\lambda^{(n)}}\left(\sup_{t\in [0,T]}\left|Y^{(n)}_t-g(X_t)\right| \right)<\varepsilon
\end{equation}
and
\begin{equation}\label{eq:922-2}
\limsup_{n\rightarrow \infty} \bE^{(n)}_{\lambda^{(n)}}\left(\sup_{t\in [0,T]} \left|Z^{(n)}_t\right|\right)<\infty. 
\end{equation}
\end{itemize}
In fact,  since $(R_\alpha)_{\alpha>0}$ is strongly continuous on $C(\overline{\bN}_\partial)$,  there exists $\alpha_0>0$ such that 
\begin{equation}\label{eq:923-2}
	\left\|\alpha_0 R_{\alpha_0} g-g\right\|_\infty<\varepsilon.  
\end{equation}
Applying the first statement of Lemma~\ref{LM54} to $g$,  we can obtain a sequence $g_n\in \bC_n$ such that $\|g_n-g\|_\infty\rightarrow 0$.  It follows from (1b) of Theorem~\ref{THM42} that
\begin{equation}\label{eq:924-2}
\lim_{n\rightarrow \infty}\left\|\alpha_0 R^{(n)}_{\alpha_0} g_n -\alpha_0 R_{\alpha_0} g\right\|_\infty =0. 
\end{equation}
For each $n$,  we define
\[
	Y^{(n)}_t:=\alpha_0 R^{(n)}_{\alpha_0} g_n(X_t),\quad Z^{(n)}_t:=\alpha_0 \left(\alpha_0 R^{(n)}_{\alpha_0}g_n-g_n \right)(X_t),
\]
and verify the conditions listed above as follows.  
It is evident that the first part of (i) holds true,  since $\sup_{n\in \bN}\|g_n\|_\infty<\infty$.  To demonstrate that \eqref{eq:513} is an $\sF^{(n)}_t$-martingale,  we note that $f:=\alpha_0 R^{(n)}_{\alpha_0}g_n \in \cD(\sL^{(n)})$ and $\sL^{(n)} f=\alpha_0 \left(\alpha_0 R^{(n)}_{\alpha_0}g_n-g_n \right)$.  Then 
\begin{equation}\label{eq:923}
	Y^{(n)}_t-\int_0^t Z^{(n)}_sds=f(X_t)-\int_0^t \sL^{(n)} f(X_s)ds
\end{equation}
is adapted to $\sF^{(n)}_t$.  By virtue of Theorem~\ref{THM64} and the Hille-Yosida theorem, we have 
\begin{equation}\label{eq:924}
\bE^{(n)}_i\left(f(X_t)-f(X_0)\right)=\bE^{(n)}_i\int_0^t \sL^{(n)} f(X_u)du
\end{equation}
for any $i\in \bN_\partial$ and $t\geq 0$.  
Since $X_t$ takes values in $\bN_\partial$ for all $t\geq 0$,  $\bP^{(n)}_{\lambda^{(n)}}$-a.s.,  it follows from the Markov property and \eqref{eq:924} that for any $0\leq s<t$, 
\[
\begin{aligned}
	\bE^{(n)}_{\lambda^{(n)}}\left(f(X_t)-f(X_s)\bigg|\sF^{(n)}_s  \right)&=\bE^{(n)}_{X_s}\left(f(X_{t-s})-f(X_0)\right)\\
	&=\bE^{(n)}_{\lambda^{(n)}}\left(\int_s^t \sL^{(n)} f(X_u)du\bigg|\sF^{(n)}_s\right).
\end{aligned}\]
As a result,  \eqref{eq:923} is an $\sF^{(n)}_t$-martingale.  Additionally,  according to  \eqref{eq:923-2} and \eqref{eq:924-2},  the left hand side of \eqref{eq:921} is not greater than
\[
\limsup_{n\rightarrow \infty}	\left(\left\|\alpha_0 R^{(n)}_{\alpha_0} g_n-\alpha_0 R_{\alpha_0} g\right\|_\infty+\left\|\alpha_0 R_{\alpha_0}g-g\right\|_\infty\right)<\varepsilon,
\]
and it follows from $\|g_n-g\|_\infty\rightarrow 0$ that the left hand side of \eqref{eq:922-2} is not greater than
\[
\limsup_{n\rightarrow \infty} 2\alpha_0 \|g_n\|_\infty=2\alpha_0 \|g\|_\infty<\infty.  
\]
Therefore,  we have established the existence of $(Y^{(n)}_t,Z^{(n)}_t)$,  thereby  completing the proof of the relative compactness of $\hat \bP^{(n)}$. 

In order to conclude that the weak convergence of $\hat\bP^{(n)}$ to $\hat \bP$,  according to \cite[\S3,  Theorem~7.8]{EK09},  it remains to verify that for $0\leq t_1<\cdots<t_m$ and $h_1,\cdots, h_m\in C_b(\bR^k)$ with $m\geq 1$,  
\begin{equation}\label{eq:927}
	\lim_{n\rightarrow \infty}\bE^{(n)}_{\lambda^{(n)}}\left(h_1(G\left(X_{t_1}\right))\cdots h_m\left(G(X_{t_m}\right)) \right)= \bE_\lambda\left(h_1\left(G(X_{t_1})\right)\cdots h_m\left(G(X_{t_m})\right)\right).
\end{equation}
Since $g_1,\cdots, g_k\in C(\overline{\bN}_\partial)$ and $h_i\in C_b(\bR^k)$ for $1\leq i\leq m$,  it follows that $f_i:=h_i\circ G\in C(\overline{\bN}_\partial)$.  Particularly,  \eqref{eq:927} is the consequence of \eqref{eq:56}.  This completes the proof.
\end{proof}

After proving the theorem mentioned above, it becomes apparent that the truncation method \eqref{eq:12} effectively enables the construction of a sequence of simple $Q$-processes, which converges to the target process with an infinite jumping measure as described in \eqref{eq:512}. Additionally, this theorem allows for the construction of various other types of examples. For instance, let us consider the case where $\infty$ is a regular boundary, and we choose an infinite measure $\nu$ that satisfies \eqref{eq:B1}. Define a sequence of measures $\nu^{(n)}$ as follows:
\[
	\nu^{(n)}_k:=0,\;0\leq k\leq n,\quad \nu^{(n)}_k:=\nu_k,\; k>n.
\]
For any constant $\beta>0$, the triple $(0,\beta, \nu^{(n)})$ corresponds to an honest $Q$-process $X^{(n)}$,  which exhibits complex jumping behavior near the boundary $\infty$, but each jump originating from $\infty$ will only enter states that  are beyond $n$.  As $n \rightarrow \infty$, the jumps of $X^{(n)}$ from $\infty$ into $\mathbb{N}$ become increasingly compressed,  and this sequence of processes converges to the $(Q,1)$-process in the sense of \eqref{eq:512}.

\section{Weak convergence for Wang's approximation}

In his 1958 doctoral thesis (see \cite[Chapter 6]{WY92}), Wang constructed a sequence of honest Doob processes for each honest $Q$-process. These processes are designed to converge to the given $Q$-process in the sense of \eqref{eq:12}. In fact, Wang’s construction remains effective even in the non-honest case, and its convergence is stronger than \eqref{eq:12} as it also ensures the weak convergence on the Skorohod topological space.   In this subsection, these findings will be  explained.

\subsection{Wang's approximation}\label{SEC61}

We begin by introducing a transformation on the trajectories. Let $x(t)$ be a c\`adl\`ag function on $\overline{\bN}_\partial$.  Consider two sequences of positive constants $(\alpha_m)$ and $(\beta_m)$ such that 
\[
	0(=:\beta_0)<\alpha_1\leq \beta_1<\alpha_2\leq \beta_2<\cdots.
\]
(These sequences may consist of finite numbers.) We say that the function $y(t)$ is obtained from $x(t)$ by the $C(\alpha_m,\beta_m)$-transformation if 
\[
\begin{aligned}
	&y(t)=x(t),  &\quad 0\leq t<\alpha_1,\\
	&y(d_m+t)=x(\beta_m+t), &\quad 0\leq t<\alpha_{m+1}-\beta_m,
\end{aligned}\]
where $d_1:=\alpha_1$ and $d_{m+1}:=d_m+(\alpha_{m+1}-\beta_m)$.  Intuitively speaking, the $C(\alpha_m,\beta_m)$-transformation discards the trajectory of $x(t)$ corresponding to the interval $[\alpha_m, \beta_m)$, keeps the segment $[0, \alpha_1)$ unchanged, and shifts the remaining parts to the left, connecting them in the original order without intersection, thereby obtaining a new c\`adl\`ag  trajectory $y(t)$.

Let $X$ be either a Doob process or a Feller $Q$-process with parameters $(\gamma,\beta,\nu)\in \mathscr Q$.  Fix $n\in \bN$.  Define $\eta:=\inf\{t>0:X_{t-}=\infty\}$ and $\sigma^{(n)}:=\inf\{t>0: X_t\in \{0,1,\cdots, n,\partial\}\}$ ($\inf\emptyset:=\infty$).  Then,  we define a sequence of stopping times as follows:
 \[
 \eta^{(n)}_1:=\eta,\quad	\sigma^{(n)}_1:=\inf\{t\geq \eta_1^{(n)}: X_t\in \{0,1,\cdots, n,\partial\}\},
 \]
 and if $\eta^{(n)}_{m-1}, \sigma^{(n)}_{m-1}$ are already defined,  we set
 \[
 	\eta^{(n)}_m:=\inf\{t\geq \sigma^{(n)}_{m-1}: X_{t-}=\infty\}\wedge \zeta
 \]
and 
\[
	\sigma^{(n)}_m:=\inf\{t\geq \eta^{(n)}_m: X_t\in \{0,1,\cdots, n,\partial\}\}.
\]
For $n\in \bN$ and every $\omega\in \Omega$,   by performing the $C(\eta^{(n)}_m(\omega),\sigma^{(n)}_m(\omega))$-transformation on $X_t(\omega)$,  we obtain a new trajectory,  denoted by $X^{(n)}_t(\omega)$. 

By following the approach of \cite[Lemma~7.1]{L24},  we can derive that
\begin{equation}\label{eq:44}
	X^{(n)}:=\left(\Omega, \sF,  X^{(n)}_t,(\bP_x)_{x\in \overline{\bN}_\partial}\right)
\end{equation}
is a Doob process with instantaneous distribution $\pi^{(n)}=\bP_0(X_{\sigma^{(n)}}\in \cdot)$.  Note that the parameters of $X^{(n)}$ are (see,  e.g.,  \cite[\S7.12, Theorem~2]{WY92} or \cite[Lemma~8.1]{L24})
\begin{equation}\label{eq:37}
	\gamma^{(n)}=\gamma,\quad \beta^{(n)}=0
\end{equation}
and 
\begin{equation}\label{eq:38-2}
\begin{aligned}
	&\nu^{(n)}_k=\nu_k,\; 0\leq k\leq n-1,\quad \nu^{(n)}_n=\frac{\frac{\beta}{2}+\sum_{k\geq n}(c_\infty-c_k)\nu_k}{c_\infty-c_n}, \\
	& \nu^{(n)}_k=0,\; k\geq n+1.
\end{aligned}\end{equation}
It is straightforward to verify that this special approximating sequence of triples satisfies the conditions in Definition~\ref{DEF51},  if and only if $\beta=0$ (see also  \cite[\S7.12, Theorem~2]{WY92}).  When $\beta>0$ (which is applicable only for the case where $\infty$ is regular),  it can be obtained that
\begin{equation}\label{eq:64}
	\lim_{n\rightarrow \infty}\nu^{(n)}(1-u^\text{min}_\alpha)=\nu(1-u^\text{min}_\alpha)+\beta\alpha\mu(u^\text{min}_\alpha)
\end{equation}
and
\begin{equation}\label{eq:65}
\lim_{n\rightarrow \infty}\nu^{(n)}(R^\text{min}_\alpha f)=\nu(R^\text{min}_\alpha f)+\beta\mu(fu^\text{min}_\alpha),\quad f\in C(\overline{\bN})
\end{equation}
by virtue of (see,  e.g., \cite[\S7.10, (3) and (9)]{WY92} or \cite[Theorem~8.1]{F59})
\begin{equation}\label{eq:66}
	\lim_{n\rightarrow \infty}\frac{\Phi^\text{min}_{nk}(\alpha)}{c_\infty-c_n}=2u^\text{min}_\alpha(k)\mu_k,\quad 	\lim_{n\rightarrow \infty}\frac{1- u^\text{min}_\alpha(n)}{c_\infty-c_n}=2\alpha \mu(u^\text{min}_\alpha).
\end{equation}
Therefore, whether $\beta=0$ or $\beta>0$, each condition in Theorem~\ref{THM42} holds true for Wang’s approximation.

\subsection{Weak convergence for Skorohod topology}

It is worth noting that the expression \eqref{eq:44} for the Doob process is not the realization given in \eqref{eq:213}.  However, it can be easily proven that the mapping induced by \eqref{eq:44},
\begin{equation}\label{eq:67}
	\mathcal{X}^{(n)}:(\Omega,\sF)\rightarrow \left(D_{\overline{\bN}_\partial}[0,\infty), \mathscr B\left(D_{\overline{\bN}_\partial}[0,\infty);d \right)\right),\quad \omega\mapsto X^{(n)}_\cdot(\omega),
	\end{equation}
is measurable. Therefore, similar to what is stated in \S\ref{SEC26}, given an initial distribution $\lambda_n \in \mathcal{P}(\bN_\partial)$, $X^{(n)}$ can be realized as a probability measure on $D_{\overline{\bN}_\partial}[0,\infty)$, denoted by $\bP^{(n)}_{\lambda_n}:=\bP_{\lambda_n}\circ \left(\mathcal{X}^{(n)}\right)^{-1}$.

Although Wang's approximation may not satisfy the conditions in Definition \ref{DEF51}, we can still prove its weak convergence on the Skorohod topology. This is because, upon examining the proof in \S\ref{SEC4}, we can see that the essential role of Definition \ref{DEF51} is to ensure the first statement of Lemma \ref{LM54} holds, as well as guaranteeing the resolvent convergence. 

\begin{theorem}
Let $X$ be a Feller $Q$-process and $X^{(n)}$ be its approximating sequence of Doob processes given in \eqref{eq:44}.   If $\lambda^{(n)}\in \mathcal{P}(\bN_\partial)$ converges weakly to $\lambda\in \mathcal{P}(\bN_\partial)$,  then $\bP^{(n)}_{\lambda^{(n)}}$ converges weakly to $\bP_\lambda$ on $D_{\overline{\bN}_\partial}[0,\infty)$ equipped with the Skorohod topology, i.e., 
\begin{equation}\label{eq:68}
	\lim_{n\rightarrow \infty}\int_{D_{\overline{\bN}_\partial}[0,\infty)}F(w)\bP^{(n)}_{\lambda^{(n)}}(dw)=\int_{D_{\overline{\bN}_\partial}[0,\infty)}F(w)\bP_{\lambda}(dw)
\end{equation}
for all $F\in C_b\left(D_{\overline{\bN}_\partial}[0,\infty); d \right)$. 
\end{theorem}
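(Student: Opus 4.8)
The plan is to reduce the statement to the part of the proof of Theorem~\ref{THM57} that handles Doob processes. Every approximating process $X^{(n)}$ in \eqref{eq:44} is a Doob process with parameters \eqref{eq:37} and \eqref{eq:38-2}, while the target $X$ is a Feller $Q$-process, so no splitting into subsequences is needed: we are exactly in the Doob-subsequence situation of Theorem~\ref{THM57}. Inspecting that argument, the hypothesis that $(\gamma^{(n)},\beta^{(n)},\nu^{(n)})$ converges in the sense of Definition~\ref{DEF51} is used through only two channels: (a) the resolvent convergence of Theorem~\ref{THM42}, and (b) the first assertion of Lemma~\ref{LM54}, namely that every $g\in\bC=C(\overline{\bN})$ is a uniform limit of functions $g_n\in\bC^{(n)}=\bC_{\nu^{(n)},\gamma^{(n)}}$. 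Wang's triples fail Definition~\ref{DEF51} precisely when $\beta>0$, so I would verify (a) and (b) directly and then copy the remainder of the proof of Theorem~\ref{THM57}.

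Input (a) is already available: as shown in \S\ref{SEC61}, the limits \eqref{eq:64} and \eqref{eq:65} (which rest on Feller's asymptotics \eqref{eq:66}) guarantee that each equivalent condition of Theorem~\ref{THM42} holds for Wang's approximation, whether $\beta=0$ or $\beta>0$. Hence the whole task reduces to establishing (b). For this I would imitate the construction in the proof of Lemma~\ref{LM54}(1). Taking without loss of generality $g=R_\alpha h\in\cD(\sL)$ with $h\in\bC$, fix $N$ with $\sum_{k=0}^N\nu_k>0$ and set $g_n:=g+\chi_n 1_{\{0,\dots,N\}}$ with $\chi_n:=\big(\sum_k(g(\infty)-g(k))\nu^{(n)}_k+\gamma^{(n)}g(\infty)\big)\big/\sum_{k=0}^N\nu^{(n)}_k$; then $g_n\in\bC^{(n)}$ by construction, and the denominator equals the positive constant $\sum_{k=0}^N\nu_k$ once $n>N$. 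Using \eqref{eq:56-2} together with \eqref{eq:64} and \eqref{eq:65}, the numerator of $\chi_n$ converges to $\sum_k(g(\infty)-g(k))\nu_k+\gamma g(\infty)+\beta\big(\alpha g(\infty)\mu(u^\text{min}_\alpha)-\mu(h u^\text{min}_\alpha)\big)$. The Feller boundary condition \eqref{eq:21-2} for $g$ rewrites the first two terms as $-\frac{\beta}{2}g^+(\infty)$, so $\chi_n\to0$ — hence $\|g_n-g\|_\infty\to0$ — exactly when the \emph{flux identity}
\[
\frac{1}{2} g^+(\infty)=\alpha g(\infty)\,\mu(u^\text{min}_\alpha)-\mu(h u^\text{min}_\alpha),\qquad g=R_\alpha h,
\]
holds (this is vacuous when $\beta=0$, as it is multiplied by $\beta$).

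Proving this flux identity is the main obstacle, and I would treat it via a discrete Green/Lagrange identity for $Q$. Writing $Q$ in divergence form $\mu_k QF(k)=\frac{1}{2}\big(D^+F(k)-D^+F(k-1)\big)$ with $D^+F(k):=(F(k+1)-F(k))/(c_{k+1}-c_k)$ and $D^+F(-1):=0$, Abel summation produces a telescoping in which the cross terms cancel, giving $\sum_k\mu_k\big(u^\text{min}_\alpha\,Qg-g\,Qu^\text{min}_\alpha\big)=\frac{1}{2}\big(g^+(\infty)\,u^\text{min}_\alpha(\infty)-g(\infty)(u^\text{min}_\alpha)^+(\infty)\big)$. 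Since $Qg=\alpha g-h$ on $\bN$ (by the representation \eqref{eq:215} and $Qu^\text{min}_\alpha=\alpha u^\text{min}_\alpha$, cf. \eqref{eq:620}), $u^\text{min}_\alpha(\infty)=1$ by \eqref{eq:23}, and $(u^\text{min}_\alpha)^+(\infty)=2\alpha\mu(u^\text{min}_\alpha)$ by \eqref{eq:66}, the left side equals $-\mu(h u^\text{min}_\alpha)$ while the right side equals $\frac{1}{2}g^+(\infty)-\alpha g(\infty)\mu(u^\text{min}_\alpha)$, which yields the identity. All $\mu$-sums converge because $\beta>0$ forces $\infty$ to be regular (by \eqref{eq:B4}), so that $\mu(\bN)<\infty$ and $c_\infty<\infty$.

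With (a) and (b) secured, the rest of the proof is a verbatim re-run of the Doob case of Theorem~\ref{THM57}: Lemma~\ref{LM54}(2) follows from (b) and the resolvent convergence; the finite-dimensional convergence \eqref{eq:56} of Theorem~\ref{THM55} then holds; the relative-compactness estimates (built from $Y^{(n)}_t=\alpha_0 R^{(n)}_{\alpha_0}g_n(X_t)$ via Theorem~\ref{THM64} and the martingale identity) go through unchanged; and the passage through \cite[\S3, Theorem~7.8 and Corollary~9.2]{EK09} gives the weak convergence. The one bookkeeping point is to record, as in \eqref{eq:67}, that $\bP^{(n)}_{\lambda^{(n)}}=\bP_{\lambda^{(n)}}\circ(\mathcal X^{(n)})^{-1}$ is precisely the law on $D_{\overline{\bN}_\partial}[0,\infty)$ of the Doob process \eqref{eq:44} started from $\lambda^{(n)}$ (the $C(\eta^{(n)}_m,\sigma^{(n)}_m)$-transformation preserves the starting point, and $\lambda^{(n)}P_0^{(n)}=\lambda^{(n)}$ on $\bN_\partial$), so the machinery of \S\ref{SEC4} indeed applies to these measures.
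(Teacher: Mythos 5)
Your strategy coincides with the paper's own: secure the resolvent convergence from \S\ref{SEC61}, verify that the first assertion of Lemma~\ref{LM54} survives for Wang's triples, and then rerun the Doob-process half of the proof of Theorem~\ref{THM57}. Your identification of the flux identity $\tfrac12 g^+(\infty)=\alpha g(\infty)\mu(u^\text{min}_\alpha)-\mu(hu^\text{min}_\alpha)$ as the crux is exactly what the paper's computation amounts to, and your derivation of it via the discrete Lagrange/Green identity is a legitimate alternative route: the paper instead reads $g^+(\infty)$ directly off the representation \eqref{eq:215}, applying Feller's asymptotics \eqref{eq:66} to $(R^\text{min}_\alpha h)^+(\infty)$ and to $(1-u^\text{min}_\alpha(n))/(c_\infty-c_n)$. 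Both routes rest on \eqref{eq:66}; yours additionally needs the (easy, but worth recording) observation that the telescoped Wronskian $u^\text{min}_\alpha(K)D^+g(K)-g(K)D^+u^\text{min}_\alpha(K)$ converges, so that $\lim_K D^+g(K)$ exists and agrees with $g^+(\infty)$ as defined through $(g(\infty)-g(k))/(c_\infty-c_k)$.

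There is, however, one genuine gap: your construction of $g_n$ presupposes $|\nu|>0$, since it begins by fixing $N$ with $\sum_{k=0}^N\nu_k>0$ and perturbing $g$ on the fixed set $\{0,\dots,N\}$. The theorem also covers Feller $Q$-processes with $|\nu|=0$ and $\beta>0$ (e.g.\ the $(Q,1)$-process, possibly with killing $\gamma$), for which no such $N$ exists. In that case $\nu^{(n)}$ is supported at the single moving point $n$ with mass $\frac{\beta/2}{c_\infty-c_n}$, so for $n>N$ a perturbation supported on $\{0,\dots,N\}$ leaves the constraint $(g_n(\infty)-g_n(n))\nu^{(n)}_n+\gamma g_n(\infty)=0$ untouched and cannot force $g_n\in\bC_{\nu^{(n)},\gamma^{(n)}}$. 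The paper treats this case separately by modifying $g$ only at the point $n$, setting $g_n(n):=g(\infty)+\frac{2\gamma}{\beta}g(\infty)(c_\infty-c_n)$ and $g_n(k):=g(k)$ for $k\neq n$, which clearly satisfies $\|g_n-g\|_\infty\to 0$. This is a small but necessary supplement to your argument; with it added, the proposal is complete.
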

\begin{proof}
Our goal is to show that the first statement of Lemma~\ref{LM54} still holds true.  It suffices to examine the case $\beta>0$,  and consider $g=R_\alpha h\in \cD(\sL)$ with $h\in \bC$ satisfying
\[
	\frac{\beta}{2}g^+(\infty)+\sum_{k\in \bN}(g(\infty)-g(k))\nu_k+\gamma g(\infty)=0,
\]
where $g^+(\infty):=\lim_{k\rightarrow \infty}\frac{g(\infty)-g(k)}{c_{\infty}-c_k}$.  Note that the measure $\nu^{(n)}$ is given by \eqref{eq:38-2}. 

In the case where $|\nu|>0$,  we can define $\chi_n$ and $g_n$ by the same method as in the proof of Lemma \ref{LM54}. Note that $g_n\in \bC_n$. Therefore, it is sufficient to prove that $\lim_{n\rightarrow \infty}\chi_n=0$.  In fact,  it follows from \eqref{eq:56-2}, \eqref{eq:64},  and \eqref{eq:65} that
\[
\begin{aligned}
	\lim_{n\rightarrow \infty}&\sum_{k\in \bN}(g(\infty)-g(k))\nu^{(n)}_k\\&=R_\alpha h(\infty)\cdot \nu(1-u^\text{min}_\alpha)+R_\alpha h(\infty)\cdot \beta\alpha \mu(u^\text{min}_\alpha)-\nu(R^\text{min}_\alpha h)-\beta\mu(h u^\text{min}_\alpha)\\
	&=\sum_{k\in \bN}(g(\infty)-g(k))\nu_k+R_\alpha h(\infty)\cdot \beta\alpha \mu(u^\text{min}_\alpha)-\beta\mu(h u^\text{min}_\alpha).  
\end{aligned}\]
By utilizing the generalized dominated convergence theorem (see \cite[\S2.3, Exercise~20]{F99}),  we can deduce from \eqref{eq:66} that
\[
	\left(R^\text{min}_\alpha h\right)^+(\infty)=\lim_{n \rightarrow \infty}\sum_{k\in \bN}\frac{\Phi^\text{min}_{nk}(\alpha)h(k)}{c_n-c_\infty}=-2\mu(u_\alpha h).
\]
Thus,  according to \eqref{eq:215} and \eqref{eq:66},  we have
\[
\begin{aligned}
	g^+(\infty)&=\left(R^\text{min}_\alpha h\right)^+(\infty)+R_\alpha h(\infty)\lim_{n\rightarrow \infty}\frac{1-u^\text{min}_\alpha(n)}{n}\\
	&=-2\mu(u_\alpha^\text{min} h)+2\alpha \mu(u^\text{min}_\alpha)R_\alpha h(\infty).
\end{aligned}\]
Therefore, 
\[
\begin{aligned}
\lim_{n\rightarrow \infty}&\left(\sum_{k\in \bN}(g(\infty)-g(k))\nu^{(n)}_k+\gamma^{(n)}g(\infty)\right)\\
&=\frac{\beta}{2}g^+(\infty)+\sum_{k\in \bN}(g(\infty)-g(k))\nu_k+\gamma g(\infty)=0.
\end{aligned}\]
This establishes $\lim_{n\rightarrow \infty}\chi_n=0$. 

In the case where $|\nu|=0$,  we take a sequence of functions $g_n$ as follows:
\[
	g_n(n):=g(\infty)+\frac{2\gamma}{\beta}g(\infty)\cdot (c_\infty-c_n),\quad g_n(k):=g(k),\; k\neq n. 
\]
It is straightforward to verify that $g_n\in \bC_n$ and $\|g_n-g\|_\infty\rightarrow 0$. 
\end{proof}

According to the Skorohod representation theorem (see \cite[\S3, Theorem~1.8]{EK09}), if \eqref{eq:68} holds, then there exist $D_{\overline{\mathbb{N}}_\partial}[0,\infty)$-valued random variables $\tilde{\mathcal{X}}$ and $\tilde{\mathcal{X}}^{(n)}$ on a probability space $(\tilde{\Omega},\tilde{\mathcal{F}},\tilde{\bP})$, such that $\tilde{\mathcal{X}}$ and $\tilde{\mathcal{X}}^{(n)}$ have distributions $\bP_\lambda$ and $\bP_{\lambda^{(n)}}^{(n)}$, respectively, and $\tilde{\mathcal{X}}^{(n)}$ converges to $\tilde{\mathcal{X}}$,  $\tilde{\bP}$-a.s. If $\lambda_n=\lambda$, then $\mathcal{X}^{(n)}$ defined in \eqref{eq:67} and $\mathcal{X}$ given by \eqref{eq:217} are on the same probability space $(\Omega, \mathcal{F}, \bP_\lambda)$. In this case, it raises the question whether $\left(\mathcal{X}^{(n)},\mathcal{X}\right)$ can be the Skorohod representation of $\left(\bP_{\lambda^{(n)}},\bP_\lambda\right)$? To verify this fact, it is equivalent to show that 
\begin{equation}\label{eq:69-2}
	d\left(\mathcal{X}^{(n)}(\omega),\mathcal{X}(\omega)\right)\rightarrow 0
\end{equation}
holds for $\bP_\lambda$-a.s. $\omega$.  Although \eqref{eq:69} provides a pointwise convergence in time $t$ for $\mathbb{P}_\lambda$-a.s. $\omega$, establishing the convergence in the Skorohod topology seems still challenging. Typically, a sufficient condition for the $d$-convergence \eqref{eq:69-2} is the local uniform convergence with respect to time $t$. However, this condition is not guaranteed by \eqref{eq:69}.


\subsection{Skorohod representation on topology for convergence in measure}

Finally, let us consider another simpler metric $d'$ on $D_{\overline{\bN}_\partial}[0,\infty)$ inducing the topology for convergence in (Lebesgue) measure. Under this metric, the sequence $X^{(n)}$ for Wang's approximation converges  not only weakly but almost surely to $X$.

For $w,w'\in D_{\overline{\bN}_\partial}[0,\infty)$,  define
\[
d'\left(w,w'\right):=\sum_{j=1}^\infty \frac{1}{2^j}\int_0^j \frac{|w(t)-w'(t)|}{1+|w(t)-w'(t)|}dt.
\]	
According to,  e.g.,  \cite[\S2.4, Exercise~32]{F99},  $d'$ is a metric on $D_{\overline{\bN}_\partial}[0,\infty)$,  and additionally,  $d\left(w_n,w\right)\rightarrow 0$ if and only if for any $T>0$,  $(w_n(t))_{0\leq t\leq T}$ converges to $(w(t))_{0\leq t\leq T}$ in (Lebesgue) measure on $[0,T]$.  Note that the Borel $\sigma$-algebra $\mathscr B\left(D_{\overline{\bN}_\partial}[0,\infty);d' \right)$ generated by $d'$ is also identical to $\sigma\{\pi_t:t\geq 0\}$, the $\sigma$-algebra generated by all projection maps $\pi_t(w)=w(t)$; see,  e.g.,  \cite[\S8.6]{CW05}.  Thus,  $\mathscr B\left(D_{\overline{\bN}_\partial}[0,\infty);d' \right)=\mathscr B\left(D_{\overline{\bN}_\partial}[0,\infty);d\right)$,  and we do not need to change the expressions for the maps $\mathcal{X}$ and $\mathcal{X}^{(n)}$ given by \eqref{eq:217} and \eqref{eq:67}.

It should be noted that in the following theorem, we do not require the target $Q$-process $X$ to be a Feller $Q$-process.

\begin{theorem}
Let $X$ be a $Q$-process and $X^{(n)}$ be its approximating sequence of Doob processes given in \eqref{eq:44}.  For any $\lambda\in \mathcal{P}(\bN_\partial)$,  it holds in the sense of $\bP_\lambda$-a.s.  that
\begin{equation}\label{eq:611}
	\lim_{n\rightarrow \infty}d'\left(\mathcal{X}^{(n)},\mathcal{X}\right)=0.
\end{equation}
Furthermore,  if $\lambda^{(n)}\in \mathcal{P}(\bN_\partial)$ converges weakly to $\lambda\in \mathcal{P}(\bN_\partial)$,  then $\bP^{(n)}_{\lambda^{(n)}}$ converges weakly to $\bP_\lambda$ on $D_{\overline{\bN}_\partial}[0,\infty)$ equipped with the topology induced by $d'$, i.e., 
\begin{equation}\label{eq:610}
	\lim_{n\rightarrow \infty}\int_{D_{\overline{\bN}_\partial}[0,\infty)}F(w)\bP^{(n)}_{\lambda^{(n)}}(dw)=\int_{D_{\overline{\bN}_\partial}[0,\infty)}F(w)\bP_{\lambda}(dw)
\end{equation}
for all $F\in C_b\left(D_{\overline{\bN}_\partial}[0,\infty); d' \right)$,  where $C_b\left(D_{\overline{\bN}_\partial}[0,\infty);d'\right)$ denotes the family of all bounded continuous functions on $D_{\overline{\bN}_\partial}[0,\infty)$ with respect to the metric $d'$.
\end{theorem}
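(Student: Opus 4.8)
The plan is to establish the pathwise statement \eqref{eq:611} first and then to read off the weak convergence \eqref{eq:610} from it, the essential point being that both $X^{(n)}$ and $X$ are realized on the \emph{same} space $(\Omega,\sF,\bP_x)$ and that $\bN_\partial$ carries the discrete topology. Recall that $X^{(n)}_\cdot(\omega)$ is produced from $X_\cdot(\omega)$ by the $C(\eta^{(n)}_m,\sigma^{(n)}_m)$-transformation, i.e.\ by deleting the random set $A_n(\omega):=\bigcup_m[\eta^{(n)}_m(\omega),\sigma^{(n)}_m(\omega))$ and closing up the gaps. Writing $D_n(t)$ for the Lebesgue measure of $A_n\cap[0,t]$, an elementary bookkeeping on the definition of the transformation gives $X^{(n)}_s=X_{\theta_n(s)}$, where $\theta_n$ is the nondecreasing time change whose shift satisfies $\theta_n(s)-s=D_n(\theta_n(s))$. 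Thus comparing $X^{(n)}_s$ with $X_s$ amounts to controlling both the deleted mass $D_n$ and the effect of a small forward shift of the path.

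The first ingredient is that the deleted time vanishes locally. On each interval $[\eta^{(n)}_m,\sigma^{(n)}_m)$ the path avoids $\{0,1,\dots,n,\partial\}$, so
\[
A_n\subseteq B_n:=\bigl\{t\ge 0:\ X_t\in\{n+1,n+2,\dots\}\cup\{\infty\}\bigr\},
\]
and the sets $B_n$ decrease, as $n\to\infty$, to $\{t:X_t=\infty\}$. Since the boundary condition \eqref{eq:21-2} carries no delay (Wentzell) term, $\infty$ is non-sticky (branching, killing, or reflecting but not sticky), and hence the occupation time $\mathrm{Leb}\{t:X_t=\infty\}$ is zero $\bP_\lambda$-a.s.; by monotone convergence $D_n(S)\le \mathrm{Leb}(B_n\cap[0,S])\to 0$ $\bP_\lambda$-a.s.\ for every $S>0$.

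To convert this into convergence in Lebesgue measure, fix $T>0$, $\varepsilon>0$, put $S:=T+1$ and $\rho_n:=D_n(S)\to 0$ a.s. For $n$ large every kept time $s\le T$ has $\theta_n(s)\le S$, so the shift obeys $0\le\theta_n(s)-s\le\rho_n$, whence $r(X^{(n)}_s,X_s)=r(X_{\theta_n(s)},X_s)\le h_{\rho_n}(s)$, where $h_\delta(s):=\sup_{0\le u\le\delta}r(X_{s+u},X_s)$. By right-continuity of the c\`adl\`ag path, $h_\delta(s)\downarrow 0$ as $\delta\downarrow 0$ for every $s$, and $h_\delta$ is uniformly bounded by the (finite) diameter of $\overline{\bN}_\partial$; dominated convergence yields $\mathrm{Leb}\{s\le S:h_\delta(s)>\varepsilon\}\to 0$ as $\delta\downarrow 0$. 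Using the monotonicity of $h_\delta$ in $\delta$, for any fixed $\delta$ and all $n$ with $\rho_n\le\delta$ we get $\mathrm{Leb}\{s\le T:r(X^{(n)}_s,X_s)>\varepsilon\}\le \mathrm{Leb}\{s\le S:h_\delta(s)>\varepsilon\}$; letting $n\to\infty$ and then $\delta\downarrow 0$ gives $\mathrm{Leb}\{s\le T:r(X^{(n)}_s,X_s)>\varepsilon\}\to 0$ a.s. This is precisely the convergence in measure on each $[0,T]$ characterizing $d'$-convergence, i.e.\ \eqref{eq:611}.

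For \eqref{eq:610}, apply \eqref{eq:611} with $\lambda=\delta_x$ for $x\in\bN_\partial$: then $\mathcal{X}^{(n)}\to\mathcal{X}$ holds $\bP_x$-a.s., so for $F\in C_b(D_{\overline{\bN}_\partial}[0,\infty);d')$ bounded convergence gives $\Psi_n(x):=\bE_xF(\mathcal{X}^{(n)})\to\Psi(x):=\bE_xF(\mathcal{X})$, with $\Psi_n,\Psi\in C_b(\bN_\partial)$ since $\bN_\partial$ is discrete. Writing $\bP^{(n)}_{\lambda^{(n)}}(F)=\int_{\bN_\partial}\Psi_n\,d\lambda^{(n)}$ and $\bP_\lambda(F)=\int_{\bN_\partial}\Psi\,d\lambda$, I split the difference as $\int(\Psi_n-\Psi)\,d\lambda^{(n)}+\bigl(\int\Psi\,d\lambda^{(n)}-\int\Psi\,d\lambda\bigr)$; the second term tends to $0$ by the weak convergence of $\lambda^{(n)}$ since $\Psi\in C_b(\bN_\partial)$. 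For the first term, choose a finite set $K=\{0,\dots,M,\partial\}$ with $\lambda(K)>1-\varepsilon$, so that $\lambda^{(n)}(K)\to\lambda(K)$, $\Psi_n\to\Psi$ uniformly on the finite set $K$, and the uniform bound $|\Psi_n-\Psi|\le 2\|F\|_\infty$ controls the $K^c$-contribution; letting $n\to\infty$ and then $\varepsilon\downarrow 0$ gives $\int|\Psi_n-\Psi|\,d\lambda^{(n)}\to 0$. This proves \eqref{eq:610}. The main obstacle is the first half: rigorously justifying the time-change identity $X^{(n)}_s=X_{\theta_n(s)}$ and, above all, the two facts that drive it, namely the inclusion $A_n\subseteq B_n$ and the vanishing of the occupation time at $\infty$, together with the measurability needed to combine the random shift $\rho_n$ with the (random) right-continuity modulus $h_\delta$; by contrast, deducing \eqref{eq:610} from \eqref{eq:611} is routine once the discreteness of $\bN_\partial$ is exploited.
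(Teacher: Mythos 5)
Your proof is correct in outline and reaches both conclusions, but your route to \eqref{eq:611} is genuinely different from the paper's. The paper simply invokes the known almost-sure pointwise convergence $\bP_i\left(\lim_{n\rightarrow\infty}X^{(n)}_t=X_t,\ \forall t\geq 0\right)=1$ (Wang's theorem, \cite[\S6.4]{WY92}, extended to the non-honest case in \cite[Theorem~A.1]{L24}) and observes that pointwise convergence in $t$ implies convergence in Lebesgue measure on each $[0,T]$, hence $d'$-convergence. You instead give a self-contained argument: the time-change identity $X^{(n)}_s=X_{\theta_n(s)}$ with $0\leq\theta_n(s)-s\leq D_n(S)$, the inclusion of the deleted set in $\{t:X_t\in\{n+1,n+2,\dots\}\cup\{\infty\}\}$, the vanishing occupation time at $\infty$, and a right-continuity/dominated-convergence step converting a small forward time shift into convergence in measure. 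This buys independence from the cited convergence theorem, at the price of delivering only convergence in measure (which is all the theorem needs) rather than the stronger pointwise statement \eqref{eq:69}. The second half of your proof --- conditioning on the starting point, bounded convergence to get $\Psi_n\rightarrow\Psi$ pointwise on $\bN_\partial$, and splitting $\int\Psi_n\,d\lambda^{(n)}-\int\Psi\,d\lambda$ into two terms --- is essentially the paper's argument; the paper handles the first term by the generalized dominated convergence theorem, and your finite-set/tightness estimate is an equivalent reformulation.

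The one soft spot is your justification that $\mathrm{Leb}\{t:X_t=\infty\}=0$ holds $\bP_\lambda$-a.s.: ``no Wentzell delay term, hence non-sticky'' is a heuristic, not a proof. The fact is true and has a one-line proof you should substitute: for $i\in\bN_\partial$ one has $P_t(i,\{\infty\})=P_t(i,\overline{\bN})-P_t(i,\bN)=\sum_{j}p_{ij}(t)-\sum_{j}p_{ij}(t)=0$, so Fubini gives $\bE_\lambda\,\mathrm{Leb}\{t\leq S:X_t=\infty\}=\int_0^S\bP_\lambda(X_t=\infty)\,dt=0$ for every $\lambda\in\mathcal{P}(\bN_\partial)$. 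With that inserted (and the routine measurability remark that $h_\delta(s)$ equals a supremum over rational $u$ by right continuity), your argument for \eqref{eq:611} is complete.
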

\begin{proof}
As established in \cite[\S6.4]{WY92} (the non-honest case is examined in \cite[Theorem~A.1]{L24}),  the following convergence holds true: for any $i\in \bN_\partial$,
\begin{equation}\label{eq:69}
\bP_i\left(\lim_{n\rightarrow \infty}X^{(n)}_t=X_t,\forall t\geq 0\right)=1.
\end{equation}
Thus,  for $\bP_\lambda$-a.s.  $\omega\in \Omega$ and any $T>0$,  $(X^{(n)}_t(\omega))_{0\leq t\leq T}$ converges to $(X_t(\omega))_{0\leq t\leq T}$ pointwise in $t$.  This convergence clearly implies the convergence in (Lebesgue) measure on $[0,T]$.  Therefore,  \eqref{eq:611} is established.  

In order to prove \eqref{eq:610},  we define for $i\in \overline{\bN}_\partial$,  
\[
g^{(n)}(i):=\int_{D_{\overline{\bN}_\partial}[0,\infty)}F(w)\bP^{(n)}_{i}(dw)=\int_{\Omega}F\left(\mathcal{X}^{(n)}(\omega)\right)\bP_i(d\omega)
\]
and 
\[
	g(i):=\int_{D_{\overline{\bN}_\partial}[0,\infty)}F(w)\bP_{i}(dw)=\int_{\Omega}F\left(\mathcal{X}(\omega)\right)\bP_i(d\omega).
\]
By utilizing \eqref{eq:611} and the dominated convergence theorem,  we have $\lim_{n\rightarrow \infty}g^{(n)}(i)=g(i)$ for any $i\in \bN_\partial$.  It follows from \cite[Theorem~8.12]{CW05} that $g^{(n)},g\in C(\overline{\bN}_\partial)$.  Thus,  $\lim_{n\rightarrow}\lambda^{(n)}(g)=\lambda(g)$.
To obtain \eqref{eq:610},  it remains to show
\[
	\lim_{n\rightarrow \infty}\left(\lambda^{(n)}(g^{(n)})-\lambda^{(n)}(g)\right)=0.
\]
In fact,  we have
\[
	\lambda^{(n)}_k(g^{(n)}(k)-g(k))\rightarrow 0,\quad k\in \bN_\partial, 
\]
and
\[
\left|\lambda^{(n)}_k(g^{(n)}(k)-g(k))\right|\leq 2\|F\|_\infty \lambda^{(n)}_k\rightarrow 2\|F\|_\infty \lambda_k,\quad k\in \bN_\partial,
\]
where $\|F\|_\infty:=\sup_{w\in D_{\overline{\bN}_\partial}[0,\infty)}|F(w)|$.  Therefore,  we can apply the generalized dominated convergence theorem (see \cite[\S2.3, Exercise~20]{F99}) to obtain
\[
	\lim_{n\rightarrow \infty}\left(\lambda^{(n)}(g^{(n)})-\lambda^{(n)}(g)\right)=\lim_{n\rightarrow \infty}\sum_{k\in \bN_\partial}\lambda^{(n)}_k(g^{(n)}(k)-g(k))=0.
\]
This completes the proof of \eqref{eq:610}. 
\end{proof}


\bibliographystyle{abbrv}
\bibliography{ApproxBD}

\end{document}